\documentclass{amsart}

\usepackage{amsfonts}
\usepackage{amssymb}
\usepackage{amsthm}
\usepackage{eucal}
\usepackage{tikz}

\theoremstyle{plain}
\newtheorem{theorem}{Theorem}[section]
\newtheorem{proposition}[theorem]{Proposition}
\newtheorem{lemma}[theorem]{Lemma}
\newtheorem{corollary}[theorem]{Corollary}

\theoremstyle{definition}

\theoremstyle{remark}

\newtheorem*{acknowledgment}{Acknowledgment}

\newcommand{\p}{\mathcal{P}}
\newcommand{\gen}[1]{\langle #1\rangle}
\newcommand{\rank}{\mathrm{rank}}
\newcommand{\frank}{\mathrm{frank}}
\newcommand{\normal}{\trianglelefteq}
\newcommand{\Aut}{\mathrm{Aut}}
\newcommand{\Z}{\mathbb{Z}}
\newcommand{\E}{\mathcal{E}}

\begin{document}
\title{Embeddings of (proper) power graphs of finite groups}
\author{A. Doostabadi}
\author{M. Farrokhi D. G.}
\keywords{Power graph, planar graph, outerplanar graph, ring graph, $1$-planar graph, almost planar graph, maximal planar graph, toroidal graph, projective graph}
\subjclass[2000]{Primary 05C25, 05C10.}
\address{Department of Pure Mathematics, Ferdowsi University of Mashhad, Mashhad, Iran}
\email{a.doostabadi@yahoo.com}
\address{Department of Pure Mathematics, Ferdowsi University of Mashhad, Mashhad, Iran}
\email{m.farrokhi.d.g@gmail.com}
\date{}

\begin{abstract}
The (proper) power graph of a group is a graph whose vertex set is the set of all (nontrivial) elements of the group and two distinct vertices are adjacent if one is a power of the other. Various kinds of planarity of (proper) power graphs of groups are discussed.
\end{abstract}
\maketitle
\section{Introduction}
Recently, there have been an increasing interest in associating graphs to algebraic structures and studying how the properties of the associated graphs influence the structure of the given algebraic structures. If $G$ is a group (or a semigroup), then the \textit{power graph} of $G$, denoted by $\p(G)$, is a graph whose vertex set is $G$ in which two distinct vertices $x$ and $y$ are adjacent if one is a power of the other, in other words, $x$ and $y$ are adjacent if $x\in\gen{y}$ or $y\in\gen{x}$.

Chakrabarty, Ghosh and Sen \cite{ic-sg-mks} investigated the power graph of semigroups and characterized all semigroups with connected or complete power graphs. In the case of groups, Cameron and Ghosh \cite{pjc-sg} showed that two finite abelian groups are isomorphic if and only if they have isomorphic power graphs. As a generalization, Cameron \cite{pjc} proves that finite groups with isomorphic power graphs, have the same number of elements of each order. Further properties of power graphs including planarity, perfectness, chromatic number and clique number are discussed by Doostabadi, Erfanian and Jafarzadeh in \cite{ad-ae-aj}.

Since the identity element in a group $G$ is adjacent to all other vertices in the power graph $\p(G)$, we may always remove the identity element and study the resulting graph called the \textit{proper power graph} of $G$. The proper power graph of $G$ is denoted by $\p^*(G)$. For further results concerning power graphs and proper power graphs, we may refer the interested reader to \cite{ad-ae-mfdg} and \cite{ad-mfdg}.

The aim of this paper is to study various kinds of planarity of (proper) power graphs. Indeed, we shall classify all groups whose (proper) power graphs are planar, outerplanar, ring graph, $1$-planar, almost planar, maximal planar, toroidal and projective. In what follows, $\omega(G)$ stands for the set of orders of all elements of a given group $G$, i.e., $\omega(G)=\{|x|:x\in G\}$. Also, a Frobenius group with kernel $K$ and a complement $H$ is denoted by $K\rtimes_F H$. The \textit{dot product} of two vertex transitive graphs $\Gamma_1$ and $\Gamma_2$, is the graph obtained from the identification of a vertex of $\Gamma_1$ with a vertex of $\Gamma_2$ and it is denoted by $\Gamma_1\cdot\Gamma_2$.
\section{Planarity of (proper) power graphs}
We begin with the usual notion of planarity. A graph $\Gamma$ is called \textit{planar} if there is an embedding of $\Gamma$ in the plane in which the edges intersect only in the terminals. A famous theorem of Kuratowski states that a graph is planar if and only if it has no subgraphs as a subdivision of the graphs $K_5$ or $K_{3,3}$ (see \cite{kk}). The following results give a characterization of all planar (proper) power graphs and have central roles in the proofs of our subsequent results. We note that a graph is said to be $\Gamma$-free if it has no induced subgraphs isomorphic to $\Gamma$.
\begin{theorem}[\cite{ad-ae-aj}]\label{planar}
Let $G$ be a group. Then $\p(G)$ is planar if and only if $\omega(G)\subseteq\{1,2,3,4\}$.
\end{theorem}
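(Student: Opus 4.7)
The plan is to handle the two implications separately, exploiting throughout the fact that $\p(\gen{x})$ sits as an induced subgraph of $\p(G)$ for every $x\in G$. For the $\Rightarrow$ direction I would argue by contrapositive: if some $x\in G$ has order $n\ge 5$, I aim to show that $\p(\Z_n)$ is already non-planar. The key observation is that every generator of $\Z_n$ is a universal vertex of $\p(\Z_n)$, because every element of $\Z_n$ lies in the subgroup it generates. Consequently the identity together with the $\phi(n)$ generators induces a clique of size $\phi(n)+1$ in $\p(\Z_n)$, which contains $K_5$ whenever $\phi(n)\ge 4$. Since $\phi(n)\ge 4$ for every $n\ge 5$ apart from $n=6$, this handles all cases but one; for $n=6$ I would produce a $K_{3,3}$ subgraph explicitly, with bipartition $\{e,x,x^5\}$ versus $\{x^2,x^3,x^4\}$, since each vertex on the left is either the identity or a generator of $\Z_6$ and hence is adjacent to every vertex on the right.

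For the $\Leftarrow$ direction, assume $\omega(G)\subseteq\{1,2,3,4\}$. I would begin by analyzing the proper power graph $\p^*(G)$ directly from the adjacency rule under the order restriction: an order-$4$ element $a$ is joined in $\p^*(G)$ only to $a^{-1}$ and to $a^2$; an order-$3$ element is joined only to its inverse; and an involution $z$ is joined only to those order-$4$ elements whose square is $z$. Consequently the connected components of $\p^*(G)$ are of exactly three types: isolated involutions (those that are not the square of any order-$4$ element), single edges arising from cyclic subgroups of order $3$, and ``windmill graphs'' obtained by gluing several triangles along a common central involution, one triangle for each cyclic subgroup of order $4$ whose unique involution is $z$. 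Each of these components is outerplanar, and a disjoint union of outerplanar graphs is outerplanar, so $\p^*(G)$ itself is outerplanar.

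The final step uses the standard fact that the join of a single vertex with an outerplanar graph is planar: embed the outerplanar graph so that all vertices lie on the boundary of one face, place the new vertex inside that face, and join it to each boundary vertex by non-crossing arcs. Since $e$ is a universal vertex of $\p(G)$, the power graph $\p(G)$ is precisely the join of $\{e\}$ with $\p^*(G)$, and is therefore planar. I anticipate that the main delicate step is the windmill analysis: one must verify that \emph{arbitrarily many} order-$4$ cyclic subgroups sharing a common involution still yield an outerplanar component, which becomes transparent once the triangles are drawn fanned around the central involution. The other non-uniform point is the sporadic case $n=6$ in the necessity direction, where the $K_5$-from-generators recipe fails and one has to extract $K_{3,3}$ by hand.
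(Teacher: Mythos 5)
Note first that the paper does not prove this theorem at all: it is imported verbatim from \cite{ad-ae-aj}, so the only internal point of comparison is the proof of the companion result, Theorem \ref{properplanar}, which uses the same two ingredients you do (cliques inside cyclic subgroups counted via the Euler function for necessity, and an explicit decomposition of the graph into small planar pieces glued along few vertices for sufficiency). Your argument is essentially correct and, in the converse direction, somewhat cleaner than the paper's: the observation that $\p^*(G)$ is a disjoint union of isolated vertices, single edges, and windmills of triangles --- hence outerplanar --- and that $\p(G)$ is the join of the universal vertex $e$ with $\p^*(G)$, packages the ``friendship graphs plus apex'' picture of the paper into one standard lemma (a cone over an outerplanar graph is planar). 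Your component analysis of $\p^*(G)$ under $\omega(G)\subseteq\{1,2,3,4\}$ is accurate: an element of order $4$ is adjacent only to its inverse and its square, an element of order $3$ only to its inverse, and an involution only to the order-$4$ elements squaring to it.

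There is one genuine, though easily repaired, gap in the necessity direction. The theorem is stated for an arbitrary group $G$, and $\omega(G)\subseteq\{1,2,3,4\}$ also fails when $G$ contains an element of infinite order; your contrapositive only treats finite orders $n\ge 5$ (the totient argument and the $n=6$ case). The paper's proof of Theorem \ref{properplanar} disposes of this case explicitly, and you should too: if $|x|=\infty$ then $\{e,x,x^2,x^4,x^8\}$ is a clique of $\p(G)$, since any two of these elements satisfy that one is a power of the other, giving $K_5$. A cosmetic remark: for $n=6$ your $K_{3,3}$ works, but $\p(\Z_6)$ already contains $K_5$ on $\{e,x,x^5,x^2,x^4\}$ (the two elements of order $3$ are mutually adjacent and $e,x,x^5$ are universal), so the sporadic case can be folded into the clique argument if you count the clique $\{e\}\cup\{\text{generators}\}\cup\gen{x^2}\setminus\{e\}$ rather than only the generators.
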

\begin{theorem}\label{properplanar}
Let $G$ be a group. Then the following conditions are equivalent:
\begin{itemize}
\item[(1)]$\p^*(G)$ is planar,
\item[(2)]$\p^*(G)$ is $K_5$-free,
\item[(3)]$\p^*(G)$ is $K_6$-free,
\item[(4)]$\p^*(G)$ is $K_{3,3}$-free,
\item[(5)]$\omega(G)\subseteq\{1,2,3,4,5,6\}$.
\end{itemize}
\end{theorem}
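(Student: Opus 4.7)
The plan is to close the cycle $(1)\Rightarrow(2)\Rightarrow(3)\Rightarrow(5)\Rightarrow(1)$ together with the side-implications $(1)\Rightarrow(4)\Rightarrow(3)$, from which the equivalence of all five conditions follows. The steps $(1)\Rightarrow(2),(3),(4)$ are immediate from Kuratowski's theorem, while $(2)\Rightarrow(3)$ and $(4)\Rightarrow(3)$ follow from the subgraph relations $K_5\subset K_6$ and $K_{3,3}\subset K_6$. For the non-trivial direction $(3)\Rightarrow(5)$ I argue the contrapositive: if some $x\in G$ has order $n\ge 7$, then $\p^*(\gen{x})\subseteq \p^*(G)$ already contains $K_6$. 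Since $\gen{x}$ is cyclic, two of its non-identity elements are adjacent in the power graph precisely when their orders are comparable under divisibility, so a clique in $\p^*(\gen{x})$ corresponds to a chain of divisors $1<d_1\mid\cdots\mid d_r$ of $n$ and has size $\sum_i\phi(d_i)$. It therefore suffices to exhibit such a chain of total weight at least $6$ for every $n\ge 7$; this is automatic when $\phi(n)\ge 6$ (covering $n=7,9,11$ and all $n\ge 13$), leaving only $n\in\{8,10,12\}$, handled by the chains $2\mid 4\mid 8$, $5\mid 10$, and $3\mid 6\mid 12$ of sizes $7$, $8$, $8$ respectively.

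For $(5)\Rightarrow(1)$, suppose $\omega(G)\subseteq\{1,\ldots,6\}$. I would analyze $\p^*(G)$ through its block decomposition, relying on the standard fact that a graph is planar if and only if each of its blocks is planar. Under the hypothesis, every maximal cyclic subgroup of $G$ has order in $\{2,3,4,5,6\}$, and two distinct such subgroups intersect either trivially, in an order-$2$ subgroup, or in an order-$3$ subgroup; in particular, every cyclic subgroup of order $5$ meets every other maximal cyclic subgroup trivially. A routine inspection then shows that the blocks of $\p^*(G)$ are exactly of five types: a $K_4$ for each cyclic subgroup of order $5$; a triangle for each cyclic subgroup of order $4$; an edge for each maximal cyclic subgroup of order $3$ not contained in any cyclic subgroup of order $6$; an isolated vertex for each element of order $2$ not contained in any cyclic subgroup of order $4$ or $6$; and, for each subgroup $H$ of order $3$ lying in at least one subgroup of order $6$, a single \emph{merged block} formed as $\bigcup_{K\supseteq H}\p^*(K)$ over all cyclic $K$ of order $6$. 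Distinct blocks meet only at cut vertices of order $2$, so planarity of $\p^*(G)$ reduces to planarity of each merged block.

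The main obstacle is therefore the planarity of this merged block. Such a block arises whenever $k\ge 1$ subgroups $K_1,\dots,K_k$ of order $6$ share a common order-$3$ subgroup $\{1,u,v\}$, as already happens with $k=3$ in $C_2\times C_2\times C_3$, whose unique subgroup of order $3$ is contained in all three of its subgroups of order $6$. The block consists of $k$ copies of $K_4$ glued along the common edge $uv$, together with $k$ pendant vertices $y_1,\dots,y_k$ (the unique order-$2$ element of each $K_i$), where each $y_i$ is attached to the two vertices of the $i$-th $K_4$ other than $u$ and $v$. To finish I would exhibit a planar embedding directly: start with a standard drawing of $K_{2,2k}$ between $\{u,v\}$ and the $2k$ order-$6$ generators in which these generators are arranged cyclically with the two generators of each $K_i$ placed consecutively; then add the edge $uv$ around the outside, the non-crossing matching pairing the two generators of each $K_i$, and finally each pendant $y_i$ inside the small face bordering its matching edge. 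Combined with the block decomposition, this furnishes a planar embedding of $\p^*(G)$.
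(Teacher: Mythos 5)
Your proposal is correct in substance and is in fact considerably more detailed than the paper's own argument, which proves only $(1)\Leftrightarrow(5)$ and dismisses the rest as ``similar.'' For $(3)\Rightarrow(5)$ you and the paper do essentially the same thing in different clothing: the paper bounds $p^m-1$ for prime-power orders and $p^m-1+\varphi(p^mq^n)$ otherwise, while you phrase cliques in $\p^*(\gen{x})$ as divisor chains and reduce to the three exceptional values $n=8,10,12$ after the observation $\varphi(n)\geq 6$; both are fine. Where you genuinely diverge is $(5)\Rightarrow(1)$: the paper offers a one-sentence ``simple verification'' that $\p^*(G)$ is a union of small complete pieces overlapping in at most an edge, whereas you give an actual block decomposition, identify the only nontrivial block type (the $k$ copies of $K_4$ glued along the edge joining the two elements of a common order-$3$ subgroup, with the $k$ order-$2$ vertices of degree two attached), and exhibit an explicit planar embedding via $K_{2,2k}$. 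This is the part the paper leaves vague, and your version is checkable; the example $\Z_2\times\Z_2\times\Z_3$ showing $k=3$ occurs is a good sanity check that the merged block really must be handled as a unit.

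Two small points to repair. First, the negation of $(5)$ also allows an element $x$ of infinite order, which your divisor-chain argument does not cover; you need one extra line, e.g.\ that $x,x^2,x^4,\ldots,x^{32}$ induce a $K_6$. Second, your step $(4)\Rightarrow(3)$ via ``$K_{3,3}\subset K_6$'' is valid only if ``$K_{3,3}$-free'' means \emph{no subgraph} isomorphic to $K_{3,3}$; the paper explicitly defines $\Gamma$-free to mean no \emph{induced} subgraph, and under that reading $K_6$ contains no induced $K_{3,3}$, so $(4)$ would hold for $G\cong\Z_7$ while $(1)$ fails and the theorem itself would be false. Your implicit subgraph reading is therefore the one that makes the statement true, but you should say so explicitly rather than appeal to a containment that does not hold at the level of induced subgraphs.
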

\begin{proof}
We just prove the equivalence of (1) and (5). The other equivalences can be establish similarly.

First assume that $\p^*(G)$ is a planar graph. If $G$ has an element of infinite order, then clearly $\p^*(G)$ has a subgraph isomorphic to $K_5$, which is a contradiction. Thus $G$ is a torsion group. Now, let $x\in G$ be an arbitrary element. If $|x|=p^m$ is a prime power, then $p^m-1\leq4$ and hence $|x|\leq5$ for $\gen{x}\setminus\{1\}$ induces a complete subgraph of $\p^*(G)$. Also, if $|x|$ is not prime power and $p^mq^n$ divides $|x|$, then the elements of $\gen{x}\setminus\{1\}$ whose orders divide $p^m$ together with elements whose orders equal $p^mq^n$ induce a complete subgraph of $\p^*(G)$ of size $p^m-1+\varphi(p^mq^n)$, where $\varphi$ is the Euler totient function. Since $\p^*(G)$ is planar, this is possible only if $|x|\leq6$, as required.

Conversely, assume that $G$ is a torsion group with $\omega(G)\subseteq\{1,2,3,4,5,6\}$. A simple verification shows that $\p^*(G)$ is a union (not necessary disjoint) of some $K_1$, $K_2$, $K_4$, friendship graphs and families of complete graphs on $4$ vertices sharing an edge in such a way that any two such graphs have at most one edge in common and any three such graphs have no vertex in common. Hence, the resulting graph is planar, which completes the proof.
\end{proof}

An \textit{$n$-coloring} of a graph $\Gamma$ is an assignment of $n$ different colors to the vertices of $\Gamma$ such that adjacent vertices have different colors. The \textit{chromatic number} $\chi(\Gamma)$ is the minimal number $n$ such that $\Gamma$ has an $n$-coloring. An \textit{$n$-star coloring} of $\Gamma$ is an $n$-coloring of $\Gamma$ such that no path on four vertices in $\Gamma$ is $2$-colored. The \textit{star chromatic number} $\chi_s(\Gamma)$ is the minimal number $n$ such that $\Gamma$ has an $n$-star coloring. Utilizing the above theorems we have:
\begin{corollary}
If $G$ is a group with planar (proper) power graph, then $\chi(\p^*(G))=\chi_s(\p^*(G))$.
\end{corollary}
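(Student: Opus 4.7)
The plan is to apply the planarity theorems to obtain a concrete structural description of $\p^*(G)$ and then to exhibit a proper colouring achieving $\chi(\p^*(G))$ that is simultaneously a star colouring. First I would invoke Theorem~\ref{planar} to deduce that $\omega(G)\subseteq\{1,2,3,4\}$. An elementary case analysis on element orders then shows that $\p^*(G)$ decomposes as a vertex-disjoint union of isolated vertices (from involutions lying outside every cyclic subgroup of order~$4$), $K_2$'s (one per cyclic subgroup of order~$3$), and friendship graphs $F_n$ (obtained by gluing all cyclic subgroups of order~$4$ that share a given involution together at that involution). Disjointness relies on the elementary facts that cyclic subgroups of orders $3$ and $4$ intersect trivially and that two distinct cyclic subgroups of order~$4$ can share at most their unique involution.

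Since $\chi$ and $\chi_s$ of a graph both equal the maximum of the corresponding parameter over its connected components, it suffices to prove $\chi(H)=\chi_s(H)$ for each of the building blocks. The cases $H\in\{K_1,K_2\}$ are immediate, as those graphs contain no path on four vertices. For a friendship graph $F_n$ (which has clique number $3$, hence $\chi(F_n)=3$), I would use the colouring that assigns colour $1$ to the central vertex $z$ and colours $2$ and $3$ to the remaining two vertices of each triangle.

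The step I expect to require the most care is the verification that this three-colouring is indeed a star colouring. The useful observation is that removing $z$ from $F_n$ leaves a perfect matching, so every path on four vertices in $F_n$ traverses $z$ as an internal vertex exactly once, and therefore has one of the two shapes $u-v-z-w$ or $u-z-v-w$ with $u,v,w$ order-$4$ elements. A brief enumeration of the colour sequences shows in each case that all three colours $1,2,3$ appear, so no path on four vertices is two-coloured. This yields $\chi_s(F_n)=3=\chi(F_n)$, and taking the maximum over components produces the desired identity $\chi(\p^*(G))=\chi_s(\p^*(G))$. Under the broader reading of the hypothesis, in which $\p^*(G)$ (rather than $\p(G)$) is planar, one would apply Theorem~\ref{properplanar} instead and additionally must handle the $K_4$ blocks from elements of order~$5$ and the $K_4$-plus-triangle blocks from elements of order~$6$; every proper $4$-colouring of a $K_4$ is automatically a star colouring since all four-vertex paths inside $K_4$ are rainbow, and extra care is needed at the involutions shared between blocks.
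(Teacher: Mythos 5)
The paper states this corollary without proof (it is offered as a direct consequence of the structure described in Theorems~\ref{planar} and~\ref{properplanar}), so your explicit reconstruction is the right kind of argument to supply. For the reading in which $\p(G)$ is planar, your proof is complete and correct: with $\omega(G)\subseteq\{1,2,3,4\}$ the components of $\p^*(G)$ are indeed isolated vertices, $K_2$'s and friendship graphs, both $\chi$ and $\chi_s$ are maxima over components, and your observation that every path on four vertices in a friendship graph passes through the centre exactly once as an internal vertex correctly yields $\chi_s(F_n)=3=\chi(F_n)$.

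The gap is in the second reading, where only $\p^*(G)$ is assumed planar and $\omega(G)\subseteq\{1,2,3,4,5,6\}$. The ``extra care \ldots{} needed at the involutions shared between blocks'' is not a deferrable detail; it is exactly where the approach breaks down. Suppose an involution $x$ is both the square of an element $z$ of order $4$ and the cube of an element $y$ of order $6$; this occurs, for instance, for the unique involution of $\mathrm{SL}(2,3)$, whose element orders are $\{1,2,3,4,6\}$, so $\p^*(\mathrm{SL}(2,3))$ is planar by Theorem~\ref{properplanar}. The set $\{y,y^2,y^4,y^5\}$ induces a $K_4$, so in any proper $4$-colouring it is rainbow; since $y$ and $y^5$ are adjacent to $x$, one of the order-$3$ elements $y^2,y^4$, say $w$, must receive the colour of $x$. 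Then for any neighbour $a\neq y$ of $x$, the sequence $a,x,y,w$ is a path on four vertices (note $w\not\sim x$ and $a\not\sim w$), and it is $2$-coloured whenever $a$ has the colour of $y$. Avoiding this for both $y$ and $y^5$ forces their colours to occur only once each among the neighbours of $x$; but all neighbours of $x$ (including $z$ and $z^3$) are confined to the three colours other than that of $x$, so already the four vertices $z,z^3,y,y^5$ would need four distinct colours from a palette of three. Hence $\chi_s>4=\chi$ for such a component, your plan of $4$-star-colouring the $K_4$ blocks cannot be completed, and in fact $\mathrm{SL}(2,3)$ appears to be a counterexample to the corollary itself under the proper reading. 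You should either restrict your proof to the hypothesis that $\p(G)$ is planar, or address this configuration head-on rather than postponing it.
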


A \textit{chord} in a graph $\Gamma$ is an edge joining two nonadjacent vertices in a cycle of $\Gamma$ and a cycle with no chord is called a \textit{primitive cycle}. A graph $\Gamma$ in which any two primitive cycles intersect in at most one edge is said to admit the \textit{primitive cycle property} (PCP). The \textit{free rank} of $\Gamma$, denoted by $\frank(\Gamma)$, is the number of primitive cycles of $\Gamma$. Also, the \textit{cycle rank} of $\Gamma$, denoted by  $\rank(\Gamma)$, is the number $e-v+c$, where $v,e,c$ are the number of vertices, the number of edges and the number of connected components of $\Gamma$, respectively. Clearly, the cycle rank of $\Gamma$ is the same as the dimension of the cycle space of $\Gamma$. By \cite[Proposition 2.2]{ig-er-rhv}, we have $\rank(\Gamma)\leqslant \frank(\Gamma)$. A graph $\Gamma$ is called a \textit{ring graph} if one of the following equivalent conditions holds (see \cite{ig-er-rhv}). 
\begin{itemize}
\item $\rank(\Gamma)=\frank(\Gamma)$,
\item $\Gamma$ satisfies the PCP and $\Gamma$ does not contain a subdivision of $K_4$ as a subgraph.
\end{itemize}

Also, a graph is \textit{outerplanar} if it has a planar embedding all its vertices lie on a simple closed curve, say a circle. A well-known result states that a graph is outerplanar if and only if it does not contain a subdivision of $K_4$ and $K_{2,3}$ as a subgraph (see \cite{gc-fh}). Clearly, every outerplanar graph is a ring graph and every ring graph is a planar graph.
\begin{theorem}
Let $G$ be a group. Then $\p(G)$ (resp. $\p^*(G)$) is ring graph if and only if $\omega(G)\subseteq\{1,2,3\}$ (resp. $\omega(G)\subseteq\{1,2,3,4\}$).
\end{theorem}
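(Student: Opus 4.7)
The plan is to combine Theorems \ref{planar} and \ref{properplanar} with the remark that outerplanar implies ring implies planar, reducing both directions to an elementary structural analysis.

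For the necessity direction, a ring graph is planar, so Theorem \ref{planar} forces $\omega(G)\subseteq\{1,2,3,4\}$ for $\p(G)$ and Theorem \ref{properplanar} forces $\omega(G)\subseteq\{1,2,3,4,5,6\}$ for $\p^*(G)$. To eliminate the extra orders I would exhibit an explicit $K_4$ subgraph, which is itself a forbidden $K_4$-subdivision: for $|x|=4$, the set $\{1,x,x^2,x^3\}$ induces a $K_4$ in $\p(G)$; for $|x|=5$, the set $\gen{x}\setminus\{1\}$ induces a $K_4$ in $\p^*(G)$; and for $|x|=6$, the set $\{x,x^2,x^4,x^5\}$ induces a $K_4$ in $\p^*(G)$, since $x^2,x^4$ lie in $\gen{x}=\gen{x^5}$ while $\gen{x^2}=\gen{x^4}$ makes them adjacent as well.

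For the sufficiency direction, I would describe the (proper) power graph explicitly and observe that it is outerplanar. If $\omega(G)\subseteq\{1,2,3\}$, then $\p(G)$ is the star at $1$ augmented by a single extra edge $\{x,x^2\}$ for each pair of elements of order $3$; that is, a windmill of pendant edges and triangles meeting only at the identity. A direct inspection shows this graph has no $K_4$- or $K_{2,3}$-subdivision, so it is outerplanar. If $\omega(G)\subseteq\{1,2,3,4\}$, the adjacency analysis yields that $\p^*(G)$ decomposes into vertex-disjoint pieces of three types: isolated involutions (those that are not squares of order-$4$ elements), disjoint edges on order-$3$ pairs $\{x,x^2\}$, and, for each involution $a$ that is a square, a friendship graph whose petals are the triangles $(a,y,y^{-1})$ ranging over order-$4$ elements with $y^2=a$. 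Since each order-$4$ element has a unique square, these pieces are genuinely vertex-disjoint, and each is outerplanar.

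The main obstacle is the adjacency bookkeeping in the $\omega(G)\subseteq\{1,2,3,4\}$ case: one must carefully rule out edges between elements of different orders other than the square relation between involutions and order-$4$ elements, and confirm that two order-$4$ elements are $\p^*$-adjacent only when they generate the same cyclic subgroup. Once this is in hand, the decomposition into isolated vertices, $K_2$'s, and friendship graphs—and hence outerplanarity, and hence the ring-graph property—is immediate.
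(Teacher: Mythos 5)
Your proof is correct and follows essentially the same route as the paper: reduce to the planarity theorems, then kill the remaining orders by exhibiting explicit $K_4$ subgraphs (the paper's necessity argument is exactly this, modulo a citation of Theorem \ref{planar} where \ref{properplanar} is meant). For sufficiency the paper simply declares the converse obvious, whereas you supply the explicit decomposition into a windmill at the identity, isolated vertices, $K_2$'s and friendship graphs and deduce outerplanarity (hence the ring-graph property); your adjacency bookkeeping there is accurate, so this is a welcome filling-in of detail rather than a different method.
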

\begin{proof}
If $\p^*(G)$ is a ring graph, then by Theorem \ref{planar}, $\omega(G)\subseteq\{1,2,3,4,5,6\}$. If $G$ has an element of order $5$ or $6$, then $\gen{x}$ contains a subgraph isomorphic to $K_4$, which is impossible. Thus $\omega(G)\subseteq\{1,2,3,4\}$. Clearly, $w(G)\subseteq\{1,2,3\}$ when $\p(G)$ is a ring graph. The converse is obvious.
\end{proof}
\begin{corollary}
Let $G$ be a group. Then $\p(G)$ (resp. $\p^*(G)$) is outerplanar if and only if it is a ring graph.
\end{corollary}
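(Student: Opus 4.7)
One direction is immediate from the remark preceding the corollary: every outerplanar graph is already a ring graph. So the content is in the converse, and the plan is to combine the previous theorem with an explicit description of the structure of $\p(G)$ (resp.\ $\p^*(G)$) to produce an outerplanar drawing, or equivalently to rule out subdivisions of $K_4$ and $K_{2,3}$.

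First I would apply the previous theorem to obtain $\omega(G)\subseteq\{1,2,3\}$ in the power graph case, and $\omega(G)\subseteq\{1,2,3,4\}$ in the proper power graph case, which immediately kills subdivisions of $K_4$ (as noted in the proof of that theorem, since $K_4$ would force an element of order $\geq5$ or $\geq6$). So the real task is to rule out subdivisions of $K_{2,3}$.

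Next I would describe the graphs explicitly. For $\p(G)$ with $\omega(G)\subseteq\{1,2,3\}$, the identity is adjacent to every other vertex, involutions are otherwise isolated, and each pair of mutually inverse elements of order $3$ together with the identity forms a triangle. Hence $\p(G)$ is a friendship (windmill) graph with some extra pendant edges attached to the central vertex, which is visibly outerplanar (draw the identity in the centre of a circle and the remaining vertices around the circle, grouping each inverse pair consecutively). For $\p^*(G)$ with $\omega(G)\subseteq\{1,2,3,4\}$, removing the identity leaves isolated involutions (those not lying in any cyclic subgroup of order $4$), isolated edges $\{x,x^{-1}\}$ for each pair of order-$3$ elements, and triangles $\{y,y^2,y^3\}$ for each cyclic subgroup of order $4$; distinct such cyclic subgroups may only share the common central involution $y^2$, so the resulting graph is again a disjoint union of $K_1$'s, $K_2$'s, and windmill-type graphs glued only at involutions. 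Each connected component is outerplanar.

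The main obstacle is the $\p^*(G)$ case, where one has to verify that cyclic subgroups of order $4$ can only meet in their unique involution and that no additional edges between distinct triangles are introduced; this is where a careful application of the power-graph adjacency criterion (distinct order-$4$ elements $y_1,y_2$ with $y_1\in\gen{y_2}$ forces $y_1=y_2^3$, so $\gen{y_1}=\gen{y_2}$) is used to conclude that any $K_{2,3}$ subdivision would require a vertex of order $\geq 5$, contradicting the previous theorem. After that, producing the outerplanar embedding by placing the vertices of each windmill component on a circle with the hub in the interior is routine.
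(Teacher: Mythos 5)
Your proposal is correct and follows the route the paper intends: the paper states this corollary without proof, relying on the observation that outerplanar always implies ring graph, and on the preceding theorem reducing the converse to $\omega(G)\subseteq\{1,2,3\}$ (resp.\ $\{1,2,3,4\}$), where the explicit windmill-plus-pendant structure you describe rules out $K_{2,3}$ subdivisions (only one vertex of degree at least $3$ per block) just as $K_4$ subdivisions are already excluded by the ring-graph hypothesis. Your write-up simply supplies the details the authors left implicit.
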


A graph is called \textit{$1$-planar} if it can be drawn in the plane such that its edges each of which is crossed by at most one other edge.
\begin{theorem}[Fabrici and Madaras \cite{if-tm}]\label{1-planar}
If $\Gamma$ is a $1$-planar graph on $v$ vertices and $e$ edges, then $e\leq4v-8$.
\end{theorem}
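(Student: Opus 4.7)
The plan is to combine Euler's formula, applied to the planarization of a 1-planar drawing, with a ``kite-counting'' double count that bounds the crossing number in terms of the edge count.

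First I would reduce to the case where $\Gamma$ is maximal 1-planar: since the desired inequality is monotone in the edge set, I may add edges until no further edge can be drawn with at most one crossing per edge. Fix a 1-planar drawing of this maximal $\Gamma$ and let $c$ be its number of crossings. Form the planarization $\Gamma^*$, the plane graph obtained by inserting a new degree-four vertex at each crossing point, so that $\Gamma^*$ has $v+c$ vertices and $e+2c$ edges. Every face of $\Gamma^*$ is bounded by at least three edges, hence Euler's formula together with the usual face--edge incidence count gives
\[
e + 2c \;\le\; 3(v+c)-6, \qquad \text{i.e.,} \qquad e \;\le\; 3v + c - 6.
\]

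The crucial technical ingredient is the \emph{kite lemma}: under the maximality hypothesis, whenever two edges $ab$ and $cd$ cross with endpoints appearing in cyclic order $a,c,b,d$ around the crossing point, the four edges $ac, cb, bd, da$ are all present in $\Gamma$ and drawn without crossings, bounding a quadrilateral region of the plane whose only interior singularity is the crossing itself. I would establish this locally: if one of the four putative kite edges were absent, then the corresponding wedge at the crossing would be free and the missing edge could be inserted without violating 1-planarity, contradicting maximality. I expect this to be the main obstacle; the delicate part is verifying that the insertion does not force any previously existing edge to acquire a second crossing, which requires a careful local analysis of the drawing near the crossing.

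Granted the kite lemma, a double count finishes the proof. Each of the $c$ crossings supplies four distinct non-crossed kite edges, while each non-crossed edge of $\Gamma$ lies on the boundary of at most two kite regions (at most one on each side in the plane), hence serves as a kite edge for at most two crossings. Writing $N = e - 2c$ for the number of non-crossed edges, I obtain $4c \le 2N = 2(e-2c)$, so $c \le e/4$. Substituting into the preliminary inequality yields $e \le 3v + e/4 - 6$, which rearranges to the desired bound $e \le 4v - 8$.
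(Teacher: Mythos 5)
The paper quotes this bound from Fabrici and Madaras without proof, so there is no in-paper argument to compare against; your proposal has to stand on its own, and as written it does not. The first half is fine: passing to the planarization and applying Euler's formula to get $e \le 3v + c - 6$ is the standard opening move (modulo the harmless caveat that $v\ge 3$ is needed, since $K_2$ violates $e\le 4v-8$). The gap is the kite lemma. Maximality only lets you conclude that each putative kite edge, say $ac$, is present \emph{somewhere} in the graph: if $ac\notin E(\Gamma)$ you can indeed draw it hugging the path from $a$ through the crossing to $c$ without creating any crossing, because nothing else can cross the arcs $xa$ or $xc$ (the edges $ab$ and $cd$ are already crossed once, by each other). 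But if $ac$ already exists and is drawn elsewhere in the plane --- possibly itself crossed --- you cannot add a second copy, and maximality gives you no control over where or how that edge is drawn. Your conclusion that the four kite edges are ``drawn without crossings, bounding a quadrilateral region whose only interior singularity is the crossing'' is therefore unjustified, and it is exactly this geometric form of the lemma that your double count needs: without the region property, a single uncrossed edge $ac$ can serve as the abstract kite edge of many crossings, and the inequality $4c\le 2N$ collapses. This is a known sore point in the area; the strong kite property is usually established only for \emph{optimal} $1$-planar graphs (those attaining $e=4v-8$), i.e., downstream of the very bound you are trying to prove.

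The repair is to replace the kite count by a different bound on $c$. Pass first to a good drawing (no self-crossings, no two adjacent edges crossing; this is achievable without increasing the number of crossings). Then no two crossing vertices of the planarization are adjacent, because an arc joining two crossing points would be a piece of an edge crossed twice. Hence the $4c$ segments incident to crossing vertices form a simple bipartite planar graph on $v+c$ vertices, so $4c\le 2(v+c)-4$, i.e., $c\le v-2$. Combined with $e\le 3v+c-6$ this gives $e\le 4v-8$ directly, with no structural hypothesis on the drawing beyond goodness.
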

\begin{corollary}
The complete graph $K_7$ is not $1$-planar.
\end{corollary}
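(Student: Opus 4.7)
The plan is to apply Theorem \ref{1-planar} directly. Suppose for contradiction that $K_7$ is $1$-planar. Then with $v=7$ vertices and $e=\binom{7}{2}=21$ edges, the Fabrici--Madaras bound would give $21 \leq 4\cdot 7 - 8 = 20$, a contradiction. So the entire argument is a one-line numerical check against the edge bound; there is no real obstacle. I would simply present the inequality $\binom{7}{2}>4\cdot 7 -8$ and cite Theorem \ref{1-planar}.
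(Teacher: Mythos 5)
Your argument is correct and is exactly the paper's own proof: apply the Fabrici--Madaras bound $e\leq 4v-8$ to $K_7$ and observe that $21>20$. Nothing further is needed.
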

\begin{proof}
Suppose on the contrary that $K_7$ is $1$-planar. Then, by Theorem \ref{1-planar}, we should have $21=e\leq 4v-8=20$, which is a contradiction.
\end{proof}

To deal with the case of $1$-planar power graphs, we need to decide on the $1$-planarity of a particular graph, which is provided by the following lemma.
\begin{lemma}\label{K9-K6+3K2}
Let $\Gamma$ be the graph obtained from $K_9\setminus K_6$ by adding three new disjoint edges. Then $\Gamma$ is not $1$-planar.
\end{lemma}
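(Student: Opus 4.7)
Write $B=\{b_1,b_2,b_3\}$ for the three vertices of $\Gamma$ outside the removed $K_6$ and $A=\{a_1,\ldots,a_6\}$ for the six vertices of that $K_6$, with $M=\{a_1a_2,a_3a_4,a_5a_6\}$ denoting the added perfect matching on $A$. Then $\Gamma=K_3\vee 3K_2$: the $K_3$ on $B$ makes each $b_i$ adjacent to every other vertex of $\Gamma$, while $A$ carries the matching $3K_2$. In particular $\Gamma$ has $9$ vertices and $24$ edges, and it contains three copies of $K_5$, namely $K_5^{(i)}=\Gamma[B\cup\{a_{2i-1},a_{2i}\}]$ for $i=1,2,3$, which pairwise share only the triangle on $B$.

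Suppose for contradiction that $\Gamma$ admits a simple $1$-planar drawing $D$, and let $\tilde\Gamma$ denote its planarization (a planar graph on $9+c$ vertices, where $c$ is the number of crossings of $D$). Each $K_5^{(i)}$ is non-planar, so the restriction of $D$ to it contains at least one crossing internal to $K_5^{(i)}$; since any two edges of the triangle on $B$ share a vertex and hence cannot cross in a simple drawing, no single crossing is internal to two distinct $K_5^{(i)}$'s. Hence $c\ge 3$, a bound also implied by Euler's formula applied to $\tilde\Gamma$.

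The plan is to exhibit a subdivision of $K_{3,3}$ inside $\tilde\Gamma$, contradicting its planarity. Take branch-vertices $B$ on one side of the bipartition, and a suitably chosen $3$-subset $T\subseteq A$ on the other. For each $(b_i,t)\in B\times T$, realize the corresponding $K_{3,3}$-edge by the direct $\Gamma$-edge $b_it$ if it is uncrossed in $D$, and otherwise by the two-segment path through the unique crossing vertex on $b_it$. These nine paths form the desired $K_{3,3}$-subdivision if and only if no crossing vertex is traversed twice; this condition reduces to requiring $T$ to contain at most one $A$-endpoint of each crossing of $D$ whose two edges are both $B$-$A$ edges of $\Gamma$.

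The technical heart is to establish the existence of a valid $T$. Because $\Gamma$ has only $18$ $B$-$A$ edges and each is crossed at most once in $D$, at most nine crossings of $D$ are between two $B$-$A$ edges; moreover, the three mandatory internal crossings of the $K_5^{(i)}$'s either are between a $B$-edge and the matching edge $a_{2i-1}a_{2i}$ (contributing no constraint at all) or are between two $B$-$A$ edges whose $A$-endpoints form the matching pair $\{a_{2i-1},a_{2i}\}$ (a constraint automatically respected by any transversal of $M$). A short case analysis on the ``forbidden $A$-pair'' graph $H$ on the six-element set $A$ then locates a valid $T$, yielding the desired contradiction. The main obstacle is handling the extremal configurations in which $H$ is dense enough to have no independent $3$-set; in each such case one either finds a suitable non-transversal $T$ (routing some $K_{3,3}$-edges through uncrossed matching or $B$-edges) or exploits the rigidity imposed by the three $K_5^{(i)}$'s sharing $B$ inside a single $1$-planar drawing to derive a direct contradiction with simplicity.
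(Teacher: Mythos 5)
Your reduction is sound as far as it goes: the planarization $\tilde\Gamma$ of a $1$-planar drawing is planar, and a $K_{3,3}$-subdivision with branch vertices $B\cup T$ would give the desired contradiction, provided the nine drawn edges $b_it$ ($b_i\in B$, $t\in T$) pairwise do not cross, i.e.\ provided $T$ is independent in your ``forbidden-pair'' graph $H$ on $A$. The genuine gap is that you never establish that such a $T$ exists, and the bounds you actually prove do not suffice. All you know about $H$ is that it is a graph on $6$ vertices with at most $9$ edges and maximum degree at most $3$ (each $a_k$ lies on only three $B$--$A$ edges, each crossed at most once); a graph in this class need not have an independent set of size $3$ --- two disjoint triangles already have independence number $2$ with only $6$ edges and all degrees equal to $2$. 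So the ``short case analysis'' you defer to is precisely where the entire difficulty of the lemma lives, and it is not carried out. Moreover, your two proposed escapes for the dense configurations are not arguments: rerouting a $K_{3,3}$-path through an uncrossed matching edge or $B$-edge forces the path to pass through a vertex of $A\cup B$ that is either itself a branch vertex or must be certified unused by the other eight paths, and no such routing is exhibited; and ``exploiting the rigidity imposed by the three $K_5^{(i)}$'s'' names a hope rather than a deduction. As written, the proof stops exactly at the point where the contradiction must be produced.

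For comparison, the paper argues locally rather than globally: in a crossing-minimal $1$-planar embedding the triangle $\Delta$ on the three dominating vertices is drawn with no crossings among its own edges, a step-by-step count shows that only the three matching edges can cross $\Delta$, and one then checks that the six matching endpoints cannot all be distributed between the inside and outside of $\Delta$ without some edge being crossed twice. Your Kuratowski-via-planarization strategy is attractive and may well be completable, but completing it requires either ruling out the dense configurations of $H$ by a genuine geometric argument or exhibiting the alternative $K_{3,3}$ (or $K_5$) subdivision explicitly in those cases; until then the claim is unproved.
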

\begin{proof}
Le $u_1,u_2,u_3$ be the vertices adjacent to all other vertices, and $\{v_1,v_2\}$, $\{v_3,v_4\}$ and $\{v_5,v_6\}$ be the three disjoint edges whose end vertices are different from $u_1,u_2,u_3$. Suppose on the contrary that $\Gamma$ is $1$-planar and consider a $1$-planar embedding $\E$ of $\Gamma$ with minimum number of crosses.  If $\E$ has an edge crossing itself or two crossing incident edges, then one can easily unknot the cross and reach to a $1$-planar embedding of $\Gamma$ with smaller number of crosses contradicting the choice of $\E$. Hence, $\E$ has neither an edge crossing itself nor two crossing incident edges. This implies that the subgraph $\Delta$ induced by $\{u_1,u_2,u_3\}$ is simply a triangle. Using a direct computation one can show, step-by-step, that
\begin{itemize}
\item the edges incident to each of the vertices $v_1,\ldots,v_6$ crosses no more than one edges of $\Delta$,
\item at most one edge of $\Delta$ is crossed by an edge of $\Gamma$ different from $\{v_1,v_2\}$, $\{v_3,v_4\}$ and $\{v_5,v_6\}$,
\item the only edges of $\Gamma$ that can cross $\Delta$ are $\{v_1,v_2\}$, $\{v_3,v_4\}$ and $\{v_5,v_6\}$.
\end{itemize}
According to the above observations, we reach to the following $1$-planar drawing of $\Gamma$ in the interior region of $\Delta$ with maximum number of vertices (see Figure 1).

Clearly, there must exists a vertex $v_i$ outside $\Delta$ adjacent to some vertex $v_j$ inside $\Delta$ where $1\leq i,j\leq 6$. Hence, by Figure 1, we must have an edge in the interior region of $\Delta$ crossed more than once, leading to a contradiction.
\end{proof}

\begin{center} 

\begin{tikzpicture}
\node [circle,fill=black,inner sep=1pt] (A) at ({0.7*cos(90)},{0.7*sin(90)}) {};
\node [circle,fill=black,inner sep=1pt] (B) at ({0.7*cos(210)},{0.7*sin(210)}) {};
\node [circle,fill=black,inner sep=1pt] (C) at ({0.7*cos(330)},{0.7*sin(330)}) {};
\node [circle,fill=black,inner sep=1pt,label=above:{\tiny{$u_1$}}] (D) at ({2*cos(90)},{2*sin(90)}) {};
\node [circle,fill=black,inner sep=1pt,label=below:{\tiny{$u_2$}}] (E) at ({2*cos(210)},{2*sin(210)}) {};
\node [circle,fill=black,inner sep=1pt,label=below:{\tiny{$u_3$}}] (F) at ({2*cos(330)},{2*sin(330)}) {};

\draw (A)--(D)--(E)--(F)--(D)--(B)	--(E)--(A)--(F)--(C)--(D);
\draw (B)--(F);
\draw (C)--(E);
\end{tikzpicture}\\
Figure 1
\end{center}

Utilizing the same method as in the proof of Lemma \ref{K9-K6+3K2}, we obtain a new minimal non-$1$-planar graph, which is of independent interest.
\begin{proposition}\label{K9-K6+2K2}
Let $\Gamma$ be the graph obtained from $K_9\setminus K_6$ by adding two new disjoint edges. Then $\Gamma$ is a minimal non-$1$-planar graph.
\end{proposition}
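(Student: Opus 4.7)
The plan is to mimic the proof of Lemma \ref{K9-K6+3K2} for the non-1-planarity claim, and then verify minimality by exhibiting explicit 1-planar drawings of $\Gamma-e$ for each edge $e$.

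For non-1-planarity, I would assume $\Gamma$ is 1-planar, choose a 1-planar embedding with the minimum number of crossings, and apply the uncrossing argument of Lemma \ref{K9-K6+3K2} to force $\Delta$ (the triangle on $\{u_1,u_2,u_3\}$) to be drawn as a simple triangle. The three enumerated observations in that lemma's proof go through almost verbatim, with ``three disjoint edges'' replaced by ``two disjoint edges'', concluding that only $\{v_1,v_2\}$ and $\{v_3,v_4\}$ may cross $\Delta$; hence each $v_i$, together with its three edges to $u_1,u_2,u_3$, lies entirely on one side of $\Delta$. The saturation analysis leading to Figure 1 then forces at most three $v_i$'s on each side, hence exactly three on each side. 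I would next split cases on the placement of the two added edges: if both lie on the same side, that side would contain all four matching endpoints, contradicting the three-per-side bound; so exactly one added edge lies on each side, and each side displays the Figure 1 configuration augmented by a single extra $v_iv_j$ chord. By the same type of saturation argument used at the end of the proof of Lemma \ref{K9-K6+3K2}, this extra chord forces some interior edge to be crossed more than once, giving the desired contradiction.

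For minimality, up to the automorphisms of $\Gamma$ there are four edge-types to treat: (a) a triangle edge $u_iu_j$; (b) a cross edge $u_iv_j$ with $v_j\in\{v_1,v_2,v_3,v_4\}$ a matched endpoint; (c) a cross edge $u_iv_j$ with $v_j\in\{v_5,v_6\}$ unmatched; and (d) one of the added edges, say $v_1v_2$. For each representative $e$ one constructs a 1-planar drawing of $\Gamma-e$ by modifying the Figure 1 pattern so that the deletion frees exactly enough room for the remaining edges; in particular, in case (d) the graph collapses to $K_9\setminus K_6$ augmented by a single disjoint edge, which is readily 1-planar by placing the sole matching pair inside $\Delta$ and distributing $v_5,v_6$ appropriately between the two sides, and the other cases are handled by symmetric variants of Figure 1.

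The main obstacle is the refined saturation claim in the non-1-planarity step: the interior pattern of Figure 1 must admit no additional $v_iv_j$ chord without forcing a double crossing. In Lemma \ref{K9-K6+3K2} the offending double crossing arose from an edge actually crossing $\Delta$, but here the extra edge lies entirely on one side of $\Delta$, so one must re-examine each crossing of Figure 1 and verify that every possible chord among the three interior $v_i$'s forces an edge already participating in a crossing to be crossed a second time. This case-by-case geometric verification is where the main work lies.
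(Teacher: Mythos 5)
The paper offers no actual proof of this proposition --- it only remarks that ``the same method as in the proof of Lemma~\ref{K9-K6+3K2}'' applies --- so your plan is the natural reading of that remark, and you have correctly isolated the load-bearing step. Unfortunately, the step you flag as ``the main work'' fails. In the Figure~1 configuration the three crossings $X_1=\{v_iu\}\times\{v_ju'\}$, etc., bound a central hexagonal face whose boundary walk is $v_i,X_1,v_j,X_2,v_k,X_3$, so \emph{all three} interior vertices lie on a common face; concretely, with the straight-line coordinates of Figure~1 the segment joining any two of the three interior vertices crosses no edge of the drawing at all. Hence a chord between two interior $v$'s can be inserted with \emph{zero} crossings, and no ``saturation'' contradiction is available. (This does not undermine Lemma~\ref{K9-K6+3K2}: there the pigeonhole forces some matching edge to cross the triangle $\Delta$, and the contradiction comes from that crossing edge having to pierce an already-crossed arc to reach an interior vertex --- every face incident to an interior vertex meets $\Delta$ in at most a corner. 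A chord lying entirely on one side of $\Delta$ is a genuinely different situation.)

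Worse, this observation turns directly into a $1$-planar drawing of $\Gamma$ itself: regard the triangle on $u_1,u_2,u_3$ as a closed curve separating the sphere into two disks, place $v_1,v_2,v_5$ in one disk and $v_3,v_4,v_6$ in the other, draw each side exactly as in Figure~1 (three crossings per side, each edge crossed once, triangle edges uncrossed), and insert $v_1v_2$ and $v_3v_4$ as crossing-free chords through the central faces of their respective sides. Every edge is then crossed at most once, so $\Gamma$ is $1$-planar and in particular is not a minimal non-$1$-planar graph. So the gap is not one you could have patched by a more careful case analysis: no argument along these lines can succeed, and the statement as written appears to be false (unless the intended graph differs from $K_{1,1,1,6}$ plus a $2$-matching in the $6$-class). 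Your minimality discussion is consequently moot; note also that the paper never addresses minimality either.
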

\begin{theorem}
Let $G$ be a group. Then $\p(G)$ is $1$-planar if and only if $\omega(G)\subseteq\{1,2,3,4,5,6\}$ and any two cyclic subgroups of $G$ of order $6$ have at most two elements in common.
\end{theorem}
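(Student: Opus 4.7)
My plan is to prove the two directions separately. For the forward direction, I will first reduce to $\omega(G) \subseteq \{1, 2, \ldots, 6\}$: since $K_7$ is not $1$-planar (by the Corollary just above Lemma~\ref{K9-K6+3K2}), any element of prime-power order $p^m$ --- whose cyclic subgroup gives $\p(\gen{x}) \cong K_{p^m}$ --- forces $p^m \leq 6$, and for composite orders $n \geq 7$ or infinite orders, the same clique-finding trick of Theorem~\ref{properplanar} (combining the generators of $\gen{x}$ with the identity and a suitable intermediate power) produces a $K_7$ in $\p(G)$.

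For the second condition, I will argue by contradiction: assume two distinct cyclic subgroups $\gen{x}, \gen{y}$ of order $6$ share at least three elements. Their intersection, being a subgroup of each cyclic group of order $6$, must be the unique order-$3$ subgroup $\gen{t}$, so (after possibly replacing $y$ by $y^{-1}$) $t = x^2 = y^2$. Since $t$ is then central in $H := \gen{x, y}$, the quotient $\bar{H} := H/\gen{t}$ is generated by the two distinct involutions $\bar{x}, \bar{y}$ and is therefore dihedral of order $2n$ with $n = |\bar{x}\bar{y}| \geq 2$, containing at least three involutions. I will show that each involution of $\bar{H}$ lifts to a unique cyclic subgroup of $H$ of order $6$ containing $\gen{t}$: the three preimages $v_0, v_0 t, v_0 t^2$ of such an involution $\bar{v}$ all have squares in $\gen{t}$, exactly two of them are of order $6$, and those two generate the same cyclic subgroup. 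This produces a third order-$6$ cyclic subgroup $\gen{z}$ with $\gen{z} \supseteq \gen{t}$, and then the induced subgraph of $\p(G)$ on $\{1, t, t^2, x, x^{-1}, y, y^{-1}, z, z^{-1}\}$ is precisely $K_3 \vee 3K_2 \cong K_9 \setminus K_6 + 3K_2$ --- not $1$-planar by Lemma~\ref{K9-K6+3K2}, yielding the desired contradiction.

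For the backward direction, I will construct an explicit $1$-planar embedding of $\p(G)$. Each cyclic subgroup of $G$ has order at most $6$, so its induced subgraph in $\p(G)$ sits inside $K_6$, which is $1$-planar; the hypothesis on pairwise intersections of order-$6$ cyclic subgroups forces distinct $K_6$-blocks to share only the identity and at most one common order-$2$ element, and other pairwise overlaps of cyclic subgroups are equally controlled. Placing the identity at a shared centre, I will embed each $K_6$-block using the standard $1$-planar drawing of $K_6$ and glue the blocks along their small overlaps, in the same spirit as the converse argument in Theorem~\ref{properplanar}, verifying that no edge is forced into a second crossing.

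The main obstacle will be the lifting step of the forward direction: passing from the dihedral structure of $\bar{H}$ to three explicit cyclic subgroups of order $6$ in $G$, and checking that the nine-vertex induced subgraph they determine is exactly $K_9 \setminus K_6 + 3K_2$. The backward assembly is technical but follows the familiar template of the converse of Theorem~\ref{properplanar}.
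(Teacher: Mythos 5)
Your proposal is correct and takes essentially the same route as the paper: the forward direction reduces to excluding $K_7$ and then invokes Lemma \ref{K9-K6+3K2} on the nine elements of orders $1,3,6$ lying in three order-$6$ cyclic subgroups that share a common $\Z_3$, the third subgroup being produced by the third involution of a dihedral group generated by two involutions (your quotient $H/\gen{t}$ formulation is just a repackaging of the paper's argument inside $C_G(g)$). The converse is likewise the same block-by-block gluing of $1$-planar drawings of the cyclic-subgroup gadgets along the vertex $1$ or along a single shared edge $\{1,a\}$ with $a$ an involution.
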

\begin{proof}
The same as in the proof of Theorem \ref{planar}, we can show that $\omega(G)\subseteq\{1,2,3,4,5,6\}$. If $\omega(G)\subseteq\{1,2,4,5\}$, then we are done. Thus we may assume that $3\in\omega(G)$. Let $g\in G$ be an element of order $3$. If $C_G(g)$ has two distinct involutions $x,y$, then it must have one more involution, say $z$, for $\gen{x,y}$ is a dihedral group. But then, by Lemma \ref{K9-K6+3K2}, the subgraph induced by elements of orders $1,3,6$ in $\gen{xg}\cup\gen{yg}\cup\gen{zg}$ is not $1$-planar giving us a contradiction. Thus no two distinct cyclic subgroups of $G$ of order $6$ have three elements in common. 

Conversely, if all the conditions are satisfied, then $\p(G)$ is a combination of induced subgraphs as drawn in Figure 2 in such a way that any these subgraphs have pairwise disjoint edges except possibly for a common edge whose end vertices are the trivial element and an involution. Therefore $\p(G)$ is $1$-planar, as required. Note that in Figure 2, $a,b,c,d,e$ denote elements of orders $2,3,4,5,6$, respectively.
\end{proof}

\begin{center} 
\begin{tabular}{ccc}
\begin{tikzpicture}
\node [circle,fill=black,inner sep=1pt,label=below:{{\tiny $1$}}] (A) at (0,0) {};
\node [circle,fill=black,inner sep=1pt,label=below:{{\tiny $a$}}] (B) at (2,0) {};
\draw (A)--(B);
\end{tikzpicture}
&
\begin{tikzpicture}
\node [circle,fill=black,inner sep=1pt,label=below:{{\tiny $b$}}] (A) at (0,0) {};
\node [circle,fill=black,inner sep=1pt,label=below:{{\tiny $b^2$}}] (B) at (2,0) {};
\node [circle,fill=black,inner sep=1pt,label=above:{{\tiny $1$}}] (C) at (1,{2*sin(60)}) {};
\draw (A)--(B)--(C)--(A);
\end{tikzpicture}
&
\begin{tikzpicture}
\node [circle,fill=black,inner sep=1pt,label=below:{{\tiny $c$}}] (A) at (0,0) {};
\node [circle,fill=black,inner sep=1pt,label=below:{{\tiny $c^3$}}] (B) at (2,0) {};
\node [circle,fill=black,inner sep=1pt,label=above:{{\tiny $1$}}] (C) at (1,{2*sin(60)}) {};
\node [circle,fill=black,inner sep=1pt,label=below:{{\tiny $c^2$}}] (D) at (1,{0.666*sin(60)}) {};
\draw (B)--(C)--(A)--(B)--(D)--(C);
\draw (A)--(D);
\end{tikzpicture}
\end{tabular}
\end{center}

\begin{center} 
\begin{tabular}{cc}
\begin{tikzpicture}
\node [circle,fill=black,inner sep=1pt,label=above:{{\tiny $1$}}] (A) at ({2*cos(90)},{2*sin(90)}) {};
\node [circle,fill=black,inner sep=1pt,label=below:{{\tiny $d^3$}}] (B) at ({2*cos(210)},{2*sin(210)}) {};
\node [circle,fill=black,inner sep=1pt,label=below:{{\tiny $d^4$}}] (C) at ({2*cos(330)},{2*sin(330)}) {};
\node [circle,fill=black,inner sep=1pt,label=10:{{\tiny $d^2$}}] (D) at ({cos(210)},{sin(210)}) {};
\node [circle,fill=black,inner sep=1pt,label=170:{{\tiny $d$}}] (E) at ({cos(330)},{sin(330)}) {};
\draw (A)--(B)--(C)--(D)--(E)--(A)--(C)--(E)--(B)--(D)--(A);
\end{tikzpicture}
&
\begin{tikzpicture}
\node [circle,fill=black,inner sep=1pt,label=left:{{\tiny $e^2$}}] (A) at ({cos(90)},{sin(90)}) {};
\node [circle,fill=black,inner sep=1pt,label=10:{{\tiny $e^4$}}] (B) at ({cos(210)},{sin(210)}) {};
\node [circle,fill=black,inner sep=1pt,label=right:{{\tiny $e$}}] (C) at ({cos(330)},{sin(330)}) {};
\node [circle,fill=black,inner sep=1pt,label=above:{{\tiny $1$}}] (D) at ({2*cos(90)},{2*sin(90)}) {};
\node [circle,fill=black,inner sep=1pt,label=below:{{\tiny $e^3$}}] (E) at ({2*cos(210)},{2*sin(210)}) {};
\node [circle,fill=black,inner sep=1pt,label=below:{{\tiny $e^5$}}] (F) at ({2*cos(330)},{2*sin(330)}) {};

\draw (B)--(F)--(A)--(D)--(E)--(F)--(D)--(C)--(A)--(B)--(C)--(E)--(F)--(C);
\draw (A)--(E);
\end{tikzpicture}
\end{tabular}\\
Figure 2
\end{center}
\begin{theorem}
Let $G$ be a group. Then $\p^*(G)$ is $1$-planar if and only if $\omega(G)\subseteq\{1,2,3,4,5,6,7\}$.
\end{theorem}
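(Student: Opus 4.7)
My plan is to imitate the proof of Theorem \ref{properplanar}, replacing the role of $K_5$ (as the smallest non-planar complete graph) by that of $K_7$, shown in the corollary following Theorem \ref{1-planar} to be the smallest non-$1$-planar complete graph; in particular, every clique of $\p^*(G)$ must have at most six vertices.

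For the forward direction, suppose $\p^*(G)$ is $1$-planar. An element of infinite order would immediately produce $K_7$, so $G$ is torsion. Fix $x\in G$. If $|x|=p^m$ is a prime power, then $\langle x\rangle\setminus\{1\}$ is a $(p^m-1)$-clique, forcing $p^m\le 7$ and $|x|\in\{2,3,4,5,7\}$. If $|x|=n$ has at least two distinct prime factors, then, as in Theorem \ref{properplanar}, for every prime power $p^m\mid n$ one obtains a clique of size $p^m-1+\varphi(n)$ inside $\p^*(G)$. Imposing $p^m-1+\varphi(n)\le 6$ for each $p^m\mid n$ and examining the small candidates $n\in\{6,10,12,14,15,\ldots\}$ leaves $n=6$ as the only admissible composite; hence $\omega(G)\subseteq\{1,2,3,4,5,6,7\}$.

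For the converse, assume $\omega(G)\subseteq\{1,\ldots,7\}$. I would present $\p^*(G)$ as a union, over cyclic subgroups $\langle x\rangle\le G$, of the subgraphs $\p^*(\langle x\rangle)$, which fall into six isomorphism types: an isolated vertex, an edge, a triangle, $K_4$, the five-vertex graph appearing in Figure 2, and $K_6$. Each is $1$-planar: the only non-planar type, $K_6$, admits a standard $1$-planar drawing with a single crossing. Because $\gcd(7,k)=1$ for every $k\in\{2,\ldots,6\}$, the $K_6$-pieces coming from distinct order-$7$ subgroups are pairwise vertex-disjoint and disjoint from every other piece, and can therefore be placed in separate regions of the plane. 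The remaining pieces overlap only at an involution or at an order-$3$ subgroup, and these overlaps can be realised in a single $1$-planar drawing extending Figure 2.

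The main obstacle will be the converse. In the $\p(G)$ analogue, the additional hypothesis on intersections of order-$6$ subgroups was needed because the identity, being a universal neighbour, assembled three order-$6$ cyclic subgroups meeting at a common order-$3$ subgroup into the forbidden graph of Lemma \ref{K9-K6+3K2}. With the identity removed, the corresponding configuration in $\p^*(G)$ becomes isomorphic to $K_8\setminus K_6+3K_2$, which is planar (hence $1$-planar), so no extra hypothesis should be required. The delicate work will be to write down a single uniform $1$-planar drawing that simultaneously accommodates every possible overlap pattern among the order-$4$ and order-$6$ pieces, in the spirit of the drawing exhibited in Figure 2.
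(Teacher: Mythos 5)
Your forward direction is exactly the paper's argument: replace $K_5$ by $K_7$ (non-$1$-planar by the corollary to Theorem \ref{1-planar}) in the clique count from the proof of Theorem \ref{properplanar}, obtaining $p^m\leq 7$ for prime-power orders and eliminating every composite order except $6$; this is correct as written. Your converse also has the right skeleton --- each cyclic subgroup of order $7$ yields a $K_6$ that is an entire connected component of $\p^*(G)$ (no order in $\omega(G)$ is a proper multiple of $7$), and $K_6$ is $1$-planar --- but you then stop and declare the assembly of the order-$\leq 6$ pieces to be ``delicate work'' still to be done. That is the one gap, and it closes in a single line: the elements of order at most $6$ induce precisely the kind of graph analyzed in the converse of Theorem \ref{properplanar} (a union of $K_1$'s, $K_2$'s, $K_4$'s, friendship graphs and $K_4$'s sharing an edge), hence a \emph{planar} graph; there is no drawing to construct. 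The paper's proof is exactly this: a disjoint union of $1$-planar $K_6$'s with a planar remainder is $1$-planar. Your side remark that the three-order-$6$-subgroups configuration loses its obstruction once the identity is deleted is correct and is the right intuition for why no extra hypothesis is needed here, unlike in the $\p(G)$ case; but rather than redrawing Figure 2 without the identity, you should simply cite Theorem \ref{properplanar} for the whole order-$\leq 6$ part.
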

\begin{proof}
The same as in the proof of Theorem \ref{properplanar}, we can show that $\omega(G)\subseteq\{1,2,3,4,5,6,7\}$. The converse is obvious since every element of order $7$ of $G$ along with its nontrivial powers gives a complete connected component of $\p^*(G)$ isomorphic to $K_6$ and the remaining elements of $G$, by Theorem \ref{properplanar}, induce a planar graph.
\end{proof}

An \textit{almost-planar} graph $\Gamma$ is a graph with an edge $e$ whose removal is a planar graph.
\begin{theorem}
Let $G$ be a group. Then $\p(G)$ is almost-planar if and only if $w(G)\subseteq\{1,2,3,4\}$ or $G$ is isomorphic to one of the groups $\Z_5$, $\Z_6$, $D_{10}$, $D_{12}$, $\Z_3\rtimes\Z_4$ or $\Z_5\rtimes_F\Z_4$.
\end{theorem}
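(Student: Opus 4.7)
The plan is to handle the two directions separately. In the easy direction, if $\omega(G)\subseteq\{1,2,3,4\}$ then $\p(G)$ is planar by Theorem~\ref{planar} and hence trivially almost-planar, while for each of the six exceptional groups one exhibits a specific edge whose removal leaves a planar graph by drawing $\p(G)$ explicitly: these graphs are small, at most $20$ vertices, and built from a central $K_5$ (or a $K_5$ with an extra degree-$3$ vertex) together with pendants and $K_4$-books sharing the edge $\{1,z\}$ at the unique involution $z$.

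For the converse, suppose $\p(G)$ is almost-planar but not planar. The first step is to reduce $\omega(G)$ to $\{1,2,3,4,5,6\}$ by reusing the clique-counting argument in the proof of Theorem~\ref{properplanar}. For $\p(G)$ rather than $\p^*(G)$ the reduction is even tighter: every element of prime-power order $p^m$ produces the clique $K_{p^m}$ inside $\p(G)$, and since removing a single edge keeps $K_n$ non-planar for $n\geqslant 6$, one is forced to have $p^m\leqslant 5$; every element of composite non-prime-power order at least $10$ contributes a clique on $\{1\}$ together with the elements of its $p$-part and the generators of the full cyclic subgroup, of size at least $6$, and so is likewise forbidden.

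Next I would argue that at most one cyclic subgroup of order $5$ exists, at most one of order $6$ exists, and the two cannot both occur. The key observation is that $K_5$ is not planar, so two $K_5$-subgraphs meeting in at most one vertex still contain a $K_5$ after any single edge deletion. Each cyclic subgroup of order $5$ realises a $K_5$, and each cyclic subgroup of order $6$ realises a $K_5$ on its identity together with its order-$3$ and order-$6$ elements. Two cyclic subgroups of order $6$ sharing only $\{1\}$ or $\{1,z\}$ reduce to this case; two sharing a $\Z_3$ yield a seven-vertex union carrying $2\binom{5}{2}-\binom{3}{2}=17$ edges, which exceeds the planarity bound $3\cdot 7-6=15$ even after one edge deletion.

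With uniqueness established the analysis splits. When $5\in\omega(G)$, the unique Sylow $5$-subgroup $N\cong\Z_5$ is normal, and any element of $C_G(N)\setminus N$ would produce an element of order $10$, $15$ or $20$; hence $C_G(N)=N$, so $G/N\hookrightarrow\Aut(N)\cong\Z_4$ and $G\in\{\Z_5,D_{10},\Z_5\rtimes_F\Z_4\}$. When $6\in\omega(G)$, the unique $\Z_6=\gen{h}$ forces no additional cyclic subgroup of order $3$ to commute with any involution and no involution outside $\gen{h^3}$ to commute with $h^2$; combined with $\omega(G)\subseteq\{1,2,3,4,6\}$ and the resulting exponent bounds on the Sylow subgroups, a case analysis on $|G|$ (which must be a $\{2,3\}$-number) reduces to $|G|\in\{6,12\}$ and yields $G\in\{\Z_6,D_{12},\Z_3\rtimes\Z_4\}$. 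The main obstacle is this last enumeration: ruling out the groups of orders $12$, $18$ and $24$ whose Sylow subgroups a priori meet the order constraints but still fail the uniqueness of $\Z_6$ (for instance $\Z_6\times\Z_2$, $\Z_3\times S_3$, $A_4\times\Z_2$ and $\mathrm{SL}_2(\Z_3)$) requires a careful walk through the small groups of exponent dividing $12$.
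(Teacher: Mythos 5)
Your overall route coincides with the paper's: bound $\omega(G)$ by $\{1,2,3,4,5,6\}$, show that two distinct cyclic subgroups of order $5$ or $6$ (or one of each) force a configuration that is not almost-planar, and then use normality of the unique such subgroup to pin down $G$. Your uniqueness step is in fact more explicit than the paper's ``simple verification'': the observation that two $K_5$'s meeting in at most one vertex survive any single edge deletion, and the edge count $17>15+1$ for two order-$6$ subgroups sharing a $\Z_3$, are correct and would strengthen the write-up. The case $5\in\omega(G)$ is handled exactly as in the paper, with a complete argument that $C_G(N)=N$.

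The gap is in the endgame for $6\in\omega(G)$. You never actually bound $|G|$: knowing only that $G$ is a $\{2,3\}$-group with $\omega(G)\subseteq\{1,2,3,4,6\}$ leaves infinitely many candidates (e.g.\ $\Z_3\times E$ with $E$ a large elementary abelian $2$-group), so ``a careful walk through the small groups of exponent dividing $12$,'' with orders $12$, $18$ and $24$ appearing ad hoc, is not a terminating procedure and it is unclear why the list you propose to check would be complete. The fix is the same device you used for the order-$5$ case and the one the paper uses here: from the uniqueness of $\gen{h}\cong\Z_6$ one gets $\gen{h}\normal G$ and $C_G(h)=\gen{h}$ (any $g\in C_G(h)\setminus\gen{h}$ of order $2$ or $3$ yields a second cyclic subgroup of order $6$ via $gh^2$ or $gh^3$ respectively; one of order $4$ yields either a second involution centralizing $h$, reducing to the previous case, or an element of order $12$; and one of order $6$ contradicts uniqueness outright). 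Hence $G/\gen{h}$ embeds in $\Aut(\Z_6)\cong\Z_2$, so $|G|$ divides $12$, after which only $\Z_6$, $D_{12}$ and $\Z_3\rtimes\Z_4$ survive. With that inserted, your argument is complete and matches the paper's.
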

\begin{proof}
Clearly, $w(G)\subseteq\{1,2,3,4,5,6\}$. If $w(G)\subseteq\{1,2,3,4\}$, then $\p(G)$ is planar and we are done. Thus we may assume that $5\in w(G)$ or $6\in w(G)$. First suppose that $5\in w(G)$. If $G$ has two distinct cycles $\gen{x}$ and $\gen{y}$ or order $5$. Then the subgraph induced by $\gen{x}\cup\gen{y}$ is isomorphic to $K_5\cdot K_5$, which is not almost-planar. Hence $G$ has a unique cyclic subgroup $\gen{x}$ of order $5$. Then $\gen{x}\normal G$ and $G/C_G(x)$ is a cyclic group of order dividing $4$. However, $C_G(x)=\gen{x}$ from which it follows that $|G|$ divides $20$ and hence $G\cong\Z_5$, $D_{10}$, $\Z_5\rtimes_F\Z_4$. Now, suppose that $5\notin w(G)$ but $6\in w(G)$. If $G$ has two distinct cyclic subgroups $\gen{x}$ and $\gen{y}$ of order $6$, then a simple verification shows that $\gen{x}\cup\gen{y}$ is never almost-planar in either of cases $\gen{x}\cap\gen{y}$ has one, two or three elements. Thus $G$ has a unique cyclic subgroup $\gen{x}$ of order $6$. Clearly, $\gen{x}\normal G$ and $C_G(x)=\gen{x}$, which implies that $|G|$ divides $12$. Therefore, $G\cong\Z_6$, $D_{12}$ or $\Z_3\rtimes\Z_4$. The converse is straightforward.
\end{proof}
\begin{theorem}
Let $G$ be a group. Then $\p^*(G)$ is almost planar if and only if $\omega(G)\subseteq\{1,2,3,4,5,6\}$.
\end{theorem}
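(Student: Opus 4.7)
The plan is to leverage Theorem \ref{properplanar} for the forward direction and produce an obstruction subgraph for the converse. Planar graphs are trivially almost planar (remove any edge), so the implication $\omega(G)\subseteq\{1,\ldots,6\}\Rightarrow\p^*(G)$ almost planar is immediate from Theorem \ref{properplanar}.

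For the converse I will show that whenever $\omega(G)\not\subseteq\{1,\ldots,6\}$, the graph $\p^*(G)$ contains $K_6$ as a subgraph. Since $K_6$ minus any single edge still contains $K_5$ and is therefore non-planar, such a $\p^*(G)$ cannot be rendered planar by deleting one edge, and so fails to be almost planar. To exhibit the $K_6$, I split into three cases mirroring the proof of Theorem \ref{properplanar}. If $G$ has an element $x$ of infinite order, then $\{x,x^2,x^4,x^8,x^{16},x^{32}\}$ is a clique of size $6$, because each entry is a power of every earlier one. If $x$ has order $p^m\geq 7$ with $p$ prime, then $\gen{x}\setminus\{1\}$ is already a clique of size $p^m-1\geq 6$. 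If $|x|\geq 7$ is not a prime power, I will reuse the clique exhibited in the proof of Theorem \ref{properplanar}: pick distinct primes $p,q$ and exponents $a,b\geq 1$ with $p^aq^b\mid|x|$; then the $p^a-1$ non-identity elements of $\gen{x}$ of order dividing $p^a$ together with the $\varphi(p^aq^b)$ elements of order exactly $p^aq^b$ induce a complete subgraph of size $p^a-1+\varphi(p^aq^b)$.

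The main (small) obstacle is verifying that this last bound is at least $6$ in every non-prime-power order $\geq 7$. The minimal such orders are $10$ and $12$: choosing $(p^a,q^b)=(5,2)$ gives $4+\varphi(10)=8$, while $(p^a,q^b)=(4,3)$ gives $3+\varphi(12)=7$. For larger orders with at least two distinct prime divisors the quantity $p^a-1+\varphi(p^aq^b)$ is monotone in $a$ and $b$, and is therefore strictly larger, so a $K_6$ subgraph of $\p^*(G)$ is always present. Combining the three cases yields the converse direction and completes the proof.
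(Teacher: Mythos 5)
Your proof is correct and follows essentially the same route as the paper: one direction is immediate because planar graphs are almost planar, and the other uses the fact that a graph containing $K_6$ cannot be made planar by deleting a single edge. The only difference is that you re-derive the implication ``$\omega(G)\not\subseteq\{1,\ldots,6\}$ implies $\p^*(G)$ contains $K_6$'' from scratch, whereas the paper simply cites the equivalence of conditions (3) and (5) in Theorem \ref{properplanar}.
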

\begin{proof}
If $\p^*(G)$ is almost planar, then since $K_6$ is not almost planar, by Theorem \ref{properplanar}, $\omega(G)\subseteq\{1,2,3,4,5,6\}$. The converse is obvious for by Theorem \ref{properplanar}, $\p^*(G)$ is planar.
\end{proof}

A simple graph is called \textit{maximal planar} if it is planar but the graph obtained by adding any new edge is not planar.
\begin{theorem} 
Let $G$ be a group. Then $\p(G)$ is maximal planar if and only if $G$ is a cyclic group of order at most four.
\end{theorem}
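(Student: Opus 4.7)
The plan is to combine Theorem \ref{planar} with a precise edge count. Since $\p(G)$ being maximal planar forces $\p(G)$ to be planar, Theorem \ref{planar} gives $\omega(G)\subseteq\{1,2,3,4\}$, so in particular $G$ is finite. The cases $|G|\le 2$ are trivial ($\p(\Z_1)=K_1$ and $\p(\Z_2)=K_2$ are maximal planar, and no other groups of those orders exist), so I focus on $n:=|G|\ge 3$ and show that only $G\cong\Z_3$ and $G\cong\Z_4$ can occur.

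A simple planar graph on $n\ge 3$ vertices is maximal planar if and only if it is a triangulation, equivalently if and only if it has exactly $3n-6$ edges. I will compute $|E(\p(G))|$ in closed form. Let $a_r$ denote the number of elements of order $r$ in $G$, so that $n=1+a_2+a_3+a_4$. A short neighborhood analysis gives $\deg(e)=n-1$, $\deg(x)=2$ for $|x|=3$, and $\deg(x)=3$ for $|x|=4$; while for an involution $x$, the non-identity neighbors of $x$ are exactly those $y$ of order $4$ with $y^2=x$, so $\deg(x)=1+2k_x$, where $k_x$ is the number of cyclic subgroups of order $4$ whose unique involution is $x$. Summing yields $\sum_{|x|=2}\deg(x)=a_2+a_4$, and the handshake lemma gives
\[
|E(\p(G))|=(n-1)+\tfrac{1}{2}(a_3+3a_4).
\]

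Setting this equal to $3n-6$ and eliminating $n$ reduces the problem to the Diophantine equation $4a_2+3a_3+a_4=6$. Now group theory intervenes: the integers $a_3$ and $a_4$ are both even (inverses pair up non-involutions of the same order), and $a_4>0$ forces $a_2>0$ (the square of an element of order $4$ is an involution). The only admissible triples are $(a_2,a_3,a_4)=(0,2,0)$ with $n=3$, giving $G\cong\Z_3$, and $(a_2,a_3,a_4)=(1,0,2)$ with $n=4$; the latter forces a unique involution, excluding $\Z_2\times\Z_2$ and leaving $G\cong\Z_4$. The converse is immediate, since $\p(\Z_n)=K_n$ and $K_n$ is maximal planar for $n\le 4$.

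I expect the most delicate point to be the degree computation for involutions, specifically the assertion that an involution $x$ has no neighbors in $\p(G)$ other than $e$ and the order-$4$ square roots of $x$; once this is verified the rest is a clean double count over cyclic subgroups of order $4$ together with a small Diophantine case analysis.
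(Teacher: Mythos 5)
Your argument reaches the right conclusion by a genuinely different route. The paper's proof is structural: it observes that if $G$ had two distinct maximal cyclic subgroups $\gen{x}$ and $\gen{y}$, the edge $\{x,y\}$ could be added to $\p(G)$ without destroying planarity, so maximality forces $G$ to have a unique maximal cyclic subgroup, i.e.\ to be cyclic of order at most $4$. You instead count edges exactly and compare with the triangulation bound $3n-6$. For finite $G$ your computation is correct: since $\omega(G)\subseteq\{1,2,3,4\}$, any non-identity neighbor $y$ of an involution $x$ must satisfy $x\in\gen{y}$ with $|y|=4$ and $y^2=x$, so the delicate degree claim holds; the handshake count $|E|=(n-1)+\tfrac12(a_3+3a_4)$ is right; and the Diophantine equation $4a_2+3a_3+a_4=6$, together with the parity of $a_3,a_4$ and the implication $a_4>0\Rightarrow a_2>0$, does isolate exactly $\Z_3$ and $\Z_4$ among groups of order at least $3$. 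Your version has the merit of making fully explicit a step the paper only asserts (that adding the edge $\{x,y\}$ keeps the graph planar); the paper's version is much shorter and, importantly, does not need $G$ to be finite.

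That last point is where your proof has a genuine gap: the inference ``$\omega(G)\subseteq\{1,2,3,4\}$, so in particular $G$ is finite'' is false. An infinite elementary abelian $2$-group, or an infinite direct sum of copies of $\Z_4$, has all element orders in $\{1,2,4\}$, and the theorem is stated for arbitrary groups. For infinite $G$ the $3n-6$ criterion, the handshake identity and the Diophantine reduction are all unavailable, so the infinite case is simply not covered by your argument. The fix is to handle non-cyclic $G$ (finite or not) directly: such a $G$ has two distinct maximal cyclic subgroups $\gen{x}$ and $\gen{y}$ of order at most $4$, and one checks that adjoining the edge $\{x,y\}$ to $\p(G)$ leaves every block planar, so $\p(G)$ is not maximal planar --- which is precisely the paper's argument. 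With that case added (or with finiteness established honestly before the count begins), your proof is complete.
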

\begin{proof}
Suppose that $\p(G)$ is maximal planar. Since $\p(G)$ is planar, by Theorem \ref{planar}, $\omega(G)\subseteq\{1,2,3,4\}$. If $G$ has two different maximal cycles $\gen{x}$ and $\gen{y}$ of order $2$, $3$ or $4$, then the addition of the edge $\{x,y\}$ results in a planar graph, which is a contradiction. Therefore, the maximal cycles in $G$ give rise to a partition for $G$. Since $\p(G)$ is connected, it follows that $G$ is cyclic, from which the result follows. The converse is clear.
\end{proof}
\begin{theorem} 
Let $G$ be a group. Then $\p^*(G)$ is maximal planar if and only if $G$ is a cyclic group of order at most five.
\end{theorem}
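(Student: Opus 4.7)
The plan is to follow the strategy of the previous theorem. For the $(\Leftarrow)$ direction, if $G$ is cyclic of order $n\in\{1,2,3,4,5\}$, then $\p^*(G)$ is, respectively, the empty graph, $K_1$, $K_2$, $K_3$, or $K_4$; each is maximal planar, vacuously for $n\leq 3$ (already complete on at most two vertices) and because $E=3V-6$ realises a sphere-triangulation for $n=4,5$. For the $(\Rightarrow)$ direction, Theorem \ref{properplanar} yields $\omega(G)\subseteq\{1,\dots,6\}$, and the task reduces to ruling out $|G|\geq 6$: among the groups of order at most five the only non-cyclic example is $\Z_2\times\Z_2$, whose proper power graph is three isolated vertices and hence not maximal planar.

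The structural fact I intend to exploit is that in a maximal planar graph on $V\geq 4$ vertices every edge lies in at least two distinct triangle subgraphs, since the two triangular faces bordering it have different third vertices. I will apply this to two specific edges of $\p^*(G)$. If $|x|=4$, I consider $\{x,x^2\}$: any third vertex $z$ of a triangle through this edge must satisfy $\gen{x}\subseteq\gen{z}$ or $\gen{z}\subseteq\gen{x}$, and since no element of $\{1,\dots,6\}$ is a proper multiple of $4$, both possibilities force $z\in\gen{x}$; the only option is $z=x^3$, so the edge lies in a unique triangle, a contradiction. If $|y|=6$ and $t=y^3$, the analogous analysis on $\{t,y\}$ forces any common neighbour $z$ of $t$ and $y$ to satisfy $t\in\gen{z}$ (so $|z|\in\{4,6\}$) and $z\in\gen{y}$ (as the order bound excludes $y\in\gen{z}$ with $z\notin\gen{y}$), leaving only $z=y^5$. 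Hence neither $4$ nor $6$ appears in $\omega(G)$ whenever $|G|\geq 5$, so $\omega(G)\subseteq\{1,2,3,5\}$.

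Finally, if $5\in\omega(G)$ then every neighbour of an order-$5$ element $x$ lies in $\gen{x}$ (once more by the order bound), so $\gen{x}\setminus\{1\}\cong K_4$ forms a connected component of $\p^*(G)$; for $|G|>5$ this disconnects the graph and allows an additional edge to be added planarly, so necessarily $G=\Z_5$. Otherwise $\omega(G)\subseteq\{1,2,3\}$, in which case every involution of $G$ is isolated in $\p^*(G)$ and every element of order $3$ has a single neighbour (its inverse), giving minimum degree at most~$1$, which contradicts the standard minimum-degree-$3$ requirement for maximal planar graphs on $V\geq 4$ vertices. The main delicacy I anticipate is the triangle-counting itself: in each case one must examine both directions of the adjacency relation ($z\in\gen{x}$ versus $x\in\gen{z}$) and dispose of the unwanted possibilities via short order-divisibility checks, which is routine but has to be written out explicitly.
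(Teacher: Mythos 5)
Your proof is correct, but it follows a genuinely different route from the paper's. The paper argues structurally on the group side: it shows that two distinct maximal cyclic subgroups of order $4$ or $6$ cannot meet nontrivially (otherwise the edge $\{x,y\}$ could be added while preserving planarity, contradicting maximality), deduces that the maximal cyclic subgroups partition $G$, and then invokes connectedness of a maximal planar graph to conclude that $G$ itself must be cyclic, finally checking the cyclic groups of order at most six. You instead work locally on the graph side, using three textbook necessary conditions for maximal planarity: every edge lies in at least two triangles and the minimum degree is at least $3$ once $V\geq4$, and the graph is connected. The triangle count at the edges $\{x,x^2\}$ (for $|x|=4$) and $\{y,y^3\}$ (for $|y|=6$) eliminates orders $4$ and $6$ outright when $|G|\geq6$, the connectedness condition pins down $\Z_5$ when $5\in\omega(G)$, and the degree condition disposes of $\omega(G)\subseteq\{1,2,3\}$; the order-divisibility checks that reduce each adjacency to containment in a single cyclic subgroup are all correct. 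Your version is more self-contained: it never needs to analyze intersections of cyclic subgroups, and it replaces the paper's unelaborated claim that ``the addition of the edge $\{x,y\}$ results in a planar graph'' with one-line counting arguments. The paper's version is shorter and its ``partition plus connectedness implies cyclic'' reduction is a reusable structural observation (it is the same device used for $\p(G)$ in the preceding theorem), but it leaves more verification to the reader. Both correctly exclude $\Z_6$ and $\Z_2\times\Z_2$ and arrive at the stated classification.
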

\begin{proof}
Suppose that $\p^*(G)$ is maximal planar. Since $\p^*(G)$ is planar, by Theorem \ref{properplanar}, $\omega(G)\subseteq\{1,2,3,4,5,6\}$. If $G$ has two different cycles $\gen{x}$ and $\gen{y}$ of order $4$ or $6$ whose intersection is nontrivial, then by Theorem \ref{properplanar}, the addition of the edge $\{x,y\}$ results in a planar graph, which is a contradiction. Therefore, the maximal cycles in $G$ give rise to a partition of $G$. Since $\p^*(G)$ is connected, it follows that $G$ is cyclic, from which the result follows. The converse is clear.
\end{proof}

It is worth noting that the structure of groups with elements of orders at most six is known and we refer the interested reader to \cite{rb-wjs,ndg-vdm,tlh-wjs,fl-blw,dvl,vdm,bhn,wjs-cy,wjs-wzy,cy-sgw-xjz,akz-vdm} for details.
\section {Toroidal (proper) power graphs}
Let $S_k$ be the sphere with $k$ handles (or connected sum of $k$ tori), where $k$ is a non-negative integer, that is, $S_k$ is an oriented surface of genus $k$. The genus of a graph $\Gamma$, denoted by $\gamma(\Gamma)$, is the minimal integer $k$ such that the graph can be embedded in $S_k$ such that the edges intersect only in the endpoints. A graph with genus $0$ is clearly a planar graph. A graph with genus $1$ is called a \textit{toroidal graph}. We note that if $\Gamma'$ is a subgraph of a graph $\Gamma$, then $\gamma(\Gamma')\leq\gamma(\Gamma)$. For complete graph $K_n$ and complete bipartite graph $K_{m,n}$, it is well known that
\[\gamma(K_n)=\left\lceil\frac{(n-3)(n-4)}{12}\right\rceil\]
if $n\geq 3$ and 
\[\gamma(K_{m,n})=\left\lceil\frac{(m-2)(n-2)}{4}\right\rceil\]
if $m,n\geq 2$. (See \cite{gr-2} and \cite{gr-1}, respectively). Thus
\begin{itemize}
\item $\gamma(K_n)=0$ for $n=1,2,3,4$,
\item $\gamma(K_n)=1$ for $n=5,6,7$,
\item $\gamma(K_n)\geq 2$ for $n\geq 8$,
\item $\gamma(K_{m,n})=0$ for $m=0,1$ or $n=0,1$, 
\item $\gamma(K_{m,n})=1$ for $\{m,n\}=\{3\}$, $\{3,4\}$, $\{3,5\}$, $\{3,6\}$, $\{4\}$,
\item $\gamma(K_{m,n})\geq2$ for $\{m,n\}=\{4,5\}$, or $m,n\geq3$ and $m+n\geq10$.
\end{itemize}
Given a connected graph $\Gamma$, we say that a vertex $v$ of $\Gamma$ is a cut-vertex if $\Gamma-v$ is disconnected. A block is a maximal connected subgraph of $\Gamma$ having no cut-vertices. The following result of Battle, Harary, Kodama, and Youngs gives a powerful tool for computing genus of various graphs.
\begin{theorem}[Battle, Harary, Kodama, and Youngs, \cite{jb-fh-yk-jwty}]\label{blocks}
Let $\Gamma$ be a graph and $\Gamma_1,\ldots,\Gamma_n$ be blocks of $\Gamma$. Then 
\[\gamma(\Gamma)=\gamma(\Gamma_1)+\cdots+\gamma(\Gamma_n).\]
\end{theorem}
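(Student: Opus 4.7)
The plan is to establish both inequalities $\gamma(\Gamma) \leq \sum_i \gamma(\Gamma_i)$ and $\gamma(\Gamma) \geq \sum_i \gamma(\Gamma_i)$ by induction on the number $n$ of blocks. The case $n=1$ is immediate, and for $n \geq 2$ the inductive step peels off a leaf $B$ of the block-cut tree, writing $\Gamma = \Gamma' \cup B$ with $\Gamma'\cap B=\{v\}$ a single cut-vertex, and applying the inductive hypothesis to $\Gamma'$.

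For the bound $\gamma(\Gamma) \leq \gamma(\Gamma') + \gamma(B)$, take minimum-genus embeddings of $\Gamma'$ and $B$ in $S_h$ and $S_k$ respectively. Around $v$ in each surface choose a small closed disk whose interior contains $v$ and small initial segments of the incident edges. Remove these two open disks, glue the resulting boundary circles into a single circle by matching up the ``stubs'' of the edges from $\Gamma'$ and $B$, and finally place a new vertex $v$ on the glued circle and reconnect each stub to $v$ by an arc in a tubular neighborhood of the circle. The Euler characteristic computation $\chi=(1-2h)+(1-2k)=2-2(h+k)$ identifies the ambient surface as $S_{h+k}$, and by construction it carries $\Gamma$.

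For the reverse direction, fix a minimum-genus embedding of $\Gamma$ in $S_g$, which by Youngs' theorem is automatically $2$-cell. The strategy is to exhibit a simple closed curve $C \subset S_g$ that passes through $v$, meets $\Gamma$ only at $v$, and separates $S_g$ into two pieces containing $B\setminus\{v\}$ and $\Gamma'\setminus\{v\}$ respectively. Given $C$, cut $S_g$ along it and cap each resulting boundary circle by a disk; this produces two closed orientable surfaces carrying embeddings of $B$ and $\Gamma'$ whose genera sum to $g$, yielding $\gamma(B)+\gamma(\Gamma')\leq g$. To construct $C$, consider a small disk $D$ about $v$ whose boundary circle meets $\Gamma$ only in the edges incident to $v$, endowed with its cyclic sequence of ``$B$-markings'' and ``$\Gamma'$-markings.'' Once this sequence is shown to split into two contiguous arcs, a diameter of $D$ through $v$ extends to the required $C$.

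The principal obstacle is precisely this last rearrangement: one must rule out that a minimum-genus embedding unavoidably interleaves $B$- and $\Gamma'$-edges at $v$. The resolution exploits the cut-vertex property — no face of the embedding incident to $v$ has its boundary walk passing from an edge of $B$ through $v$ into an edge of $\Gamma'$ and back to $v$ without first revisiting $v$ — combined with a handle-sliding surgery that separates the interleaved edges while preserving the underlying surface up to homeomorphism. Once the splittable cyclic order is secured, the entire argument reduces to routine Euler-characteristic bookkeeping under the cut-and-paste operations described above.
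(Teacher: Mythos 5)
The paper offers no proof of this statement; it is quoted verbatim from Battle, Harary, Kodama and Youngs \cite{jb-fh-yk-jwty}, so your attempt can only be judged on its own merits. Your inequality $\gamma(\Gamma)\leq\gamma(\Gamma')+\gamma(B)$ is fine: the disk-removal and boundary-gluing construction, together with the Euler characteristic computation $\chi=2-2(h+k)$, is the standard argument and is correct.

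The reverse inequality, however, contains a genuine gap at exactly the point you flag as ``the principal obstacle.'' Your proposed resolution --- a ``handle-sliding surgery that separates the interleaved edges while preserving the underlying surface up to homeomorphism'' --- is asserted, not proved, and as stated it is false in the generality you need: changing the cyclic order of edges at a single vertex of an embedding changes the face boundary walks, hence in general changes the Euler characteristic and the homeomorphism type of the embedding surface. The whole content of the theorem is that the specific de-interleaving modification (putting all $B$-edges consecutively in the rotation at $v$, followed by all $\Gamma'$-edges) does not \emph{decrease} the number of faces, and therefore does not increase the genus. That claim requires a careful count of how the face boundary walks through $v$ are cut apart and re-spliced by the modification, using the observation (which you do state correctly) that any face walk passing from $B$ to $\Gamma'$ must return through $v$. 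Without that counting argument, the existence of your separating curve $C$ --- and with it the inequality $\gamma(B)+\gamma(\Gamma')\leq g$ --- is unestablished. Note also that the standard proofs bypass the separating-curve picture entirely: one works with rotation systems, proves the face inequality $f\leq f_B+f_{\Gamma'}-1$ for the induced embeddings, and concludes by Euler's formula, since $n=n_B+n_{\Gamma'}-1$ and $e=e_B+e_{\Gamma'}$ then give $2-2g\leq(2-2g_B)+(2-2g_{\Gamma'})-2$. Supplying that face count (or an equivalent argument) is what your proof is missing.
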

\begin{theorem}\label{toroidal}
Let $G$ be a group. Then $\p(G)$ is a toroidal graph if and only if $G\cong\Z_5$, $\Z_6$, $\Z_7$, $D_{10}$, $D_{12}$, $D_{14}$, $\Z_3\rtimes\Z_4$, $\Z_5\rtimes_F\Z_4$ or $\Z_7\rtimes_F\Z_3$.
\end{theorem}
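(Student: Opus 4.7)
The plan is to prove both implications by reducing to the block decomposition of $\p(G)$ via Theorem~\ref{blocks}, combined with the genera $\gamma(\p(\Z_5))=\gamma(\p(\Z_6))=\gamma(\p(\Z_7))=1$ and $\gamma(K_n)\geq 2$ for $n\geq 8$, in close parallel to the almost-planar proof. For the necessity, since every cyclic $\gen{x}\leq G$ induces a $K_{|x|}$ in $\p(G)$, the hypothesis $\gamma(\p(G))=1$ forces $\omega(G)\subseteq\{1,\ldots,7\}$, and non-planarity gives an element of order in $\{5,6,7\}$ by Theorem~\ref{planar}. The key general tool is: if $\gen{x}\cap\gen{y}=\{1\}$, then in the induced subgraph $\p(\gen{x}\cup\gen{y})$ the identity is a cut vertex and the two blocks are the $2$-connected graphs $\p(\gen{x})$ and $\p(\gen{y})$, so Theorem~\ref{blocks} applied to this subgraph yields $\gamma(\p(G))\geq\gamma(\p(\gen{x}))+\gamma(\p(\gen{y}))$. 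Choosing $|x|,|y|\in\{5,6,7\}$ with trivial intersection forces $\gamma\geq 2$; since any two distinct orders in $\{5,6,7\}$ are pairwise coprime and any two distinct cyclic subgroups of prime order ($5$ or $7$) automatically intersect trivially, at most one of the orders $5,6,7$ appears in $\omega(G)$, and in the cases $p\in\{5,7\}$ the Sylow $p$-subgroup is unique and normal.

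Given this, I would continue exactly as in the almost-planar theorem: letting $\gen{x}$ be the unique (to be verified for $p=6$) subgroup of order $p\in\{5,6,7\}$, any $y\in C_G(x)\setminus\gen{x}$ would produce an element of forbidden order, so $C_G(x)=\gen{x}$; the embedding $G/\gen{x}\hookrightarrow\Aut(\Z_p)$ then forces $|G|\mid 4p,\,2p,\,6p$ for $p=5,6,7$ respectively, and a short enumeration of groups of those orders with permitted element-order set yields precisely the listed groups, after eliminating spurious candidates such as $\Z_7\rtimes\Z_6$ (whose $\Z_6$ complement and $\Z_7$ kernel intersect trivially, producing by the block argument above two disjoint blocks $\p(\Z_6)$ and $K_7$ of genus $1$ each, hence $\gamma\geq 2$). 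Conversely, for each listed $G$ I would read off the blocks of $\p(G)$: outside $\Z_6,D_{12},\Z_3\rtimes\Z_4$, they consist of a single $K_p$ (genus $1$) joined at the identity to planar cliques $K_2,K_3,K_4$ (genus $0$), so $\gamma(\p(G))=1$ at once by Theorem~\ref{blocks}; the remaining three groups require an explicit toroidal drawing of $\p(\Z_6)$, and since $\p(\Z_3\rtimes\Z_4)$ is $2$-connected a direct toroidal drawing of the full power graph.

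The hard part will be the $p=6$ branch, where two cyclic subgroups of order $6$ may intersect in a subgroup of order $2$ or $3$; the identity is then no longer a cut vertex of $\p(\gen{x}\cup\gen{y})$, so Theorem~\ref{blocks} does not by itself force $\gamma\geq 2$. For each such intersection pattern one must show directly, for instance by locating enough overlapping $K_5$-subgraphs or by producing a $K_8$ or $K_{4,4}$ minor of the combined induced subgraph, that its genus already exceeds $1$; this is precisely the ``simple verification'' invoked but not carried out in the analogous almost-planar theorem, and it is the only step that requires genuine graph-theoretic work rather than group-theoretic bookkeeping.
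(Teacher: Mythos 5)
Your overall architecture (reduce to $\omega(G)\subseteq\{1,\dots,7\}$, use the block additivity theorem to force a unique ``large'' cyclic subgroup, then pin $G$ down via $C_G(x)=\gen{x}$ and $G/\gen{x}\hookrightarrow\Aut(\gen{x})$) matches the paper's, and your treatment of the orders $5$ and $7$, the elimination of $\Z_7\rtimes\Z_6$, and the converse is sound. The gap is exactly where you locate it, in the order-$6$ branch, but your proposed repair does not work. When two distinct cyclic subgroups $\gen{x},\gen{y}$ of order $6$ satisfy $\gen{x}\cap\gen{y}\cong\Z_3$, the induced subgraph on $\gen{x}\cup\gen{y}$ is a single $2$-connected block on $9$ vertices with $23$ edges (two copies of $K_6$ minus a $2$-path, glued along the triangle spanned by the identity and the shared $\Z_3$), and there is no reason to expect its genus to exceed $1$: it satisfies the toroidal Euler bound $e\leq 3v$ with room to spare; it cannot contain a $K_8$ minor, since a minor never has more edges than its host and $K_8$ has $28>23$ edges; and exhibiting a $K_{4,4}$ minor would prove nothing because $\gamma(K_{4,4})=1$. ``Enough overlapping $K_5$'s'' also yields no lower bound, because the additivity theorem only adds genera across cut vertices and this configuration has none. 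The paper conspicuously does \emph{not} claim this $9$-vertex graph is non-toroidal, which strongly suggests it is in fact toroidal and that your plan to refute every intersection pattern purely graph-theoretically cannot be completed.

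What the paper does instead in this branch: it kills $\Z_2$-intersections via the $K_5\cdot K_5$ obstruction and trivial intersections via two genus-one blocks (as you do), so that all cyclic subgroups of order $6$ share a single $\Z_3$; it bounds their number by three using a $K_{3,8}$ subgraph; it disposes of three such subgroups by a computer-verified genus computation (Myrvold's toroidality tester applied to the $12$-vertex union); and for exactly two such subgroups it switches entirely to group theory, showing $|N_G(\gen{x})|$ divides $12$, hence $|G|$ divides $24$, and no group of order dividing $24$ has exactly two cyclic subgroups of order $6$. You need a substitute for these last two steps --- either the same group-theoretic elimination and a genuine genus computation for the three-subgroup configuration, or some other argument --- before your proof closes; the rest of your outline is consistent with the paper's.
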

\begin{proof}
First assume that $\p(G)$ is a toroidal graph. The same as in the proof of Theorem \ref{properplanar}, we can show that  $\omega(G)\subseteq\{1,2,3,4,5,6,7\}$. If $\omega(G)\subseteq\{1,2,3,4\}$, then by Theorem \ref{planar}, $\p(G)$ is planar, which is a contradiction. Thus $\omega(G)\cap\{5,6,7\}\neq\emptyset$. Since $\gen{x}$ is a block of $\p(G)$ when $|x|=5,7$, and $\gen{x}$ is a subgraph of a block of $\p(G)$ when $|x|=6$, Theorem \ref{blocks} shows that $\omega(G)\subseteq\{1,2,3,4,5\}$, $\{1,2,3,4,6\}$ or $\{1,2,3,4,7\}$. Moreover, $G$ has at most one subgroup of order $5$ and $7$. 

If $7\in\omega(G)$, then $G$ has a unique cyclic subgroup $\gen{x}$ of order $7$. Clearly, $\gen{x}\normal G$ and $C_G(x)=\gen{x}$. Since $G/C_G(x)$ is isomorphic to a subgroup of $\Aut(\gen{x})$ and $6\notin\omega(G)$, $G/C_G(x)$ is a cyclic group of order at most $3$, from which it follows that $G\cong\Z_7$, $D_{14}$ or $\Z_7\rtimes_F\Z_3$. Similarly, if $5\in\omega(G)$, then we can show that $G\cong\Z_5$, $D_{10}$ or $\Z_5\rtimes_F\Z_4$. 

Finally, suppose that $6\in\omega(G)$. If $G$ has a unique cyclic subgroup of order $6$, say $\gen{x}$, then $\gen{x}\normal G$ and a simple verification shows that $C_G(x)=\gen{x}$. Since $G/\gen{x}$ is isomorphic to a subgroup of $\Aut(\gen{x})$, it follows that $G\cong\Z_6$, $D_{12}$ or $\Z_3\rtimes\Z_4$. Now suppose that $G$ has at least to distinct cyclic subgroups of order $6$. If $G$ has two distinct cyclic subgroups $\gen{x}$ and $\gen{y}$ of order $6$ such that $\gen{x}\cap\gen{y}=\gen{a}\cong\Z_2$, then the subgraph induced $\gen{x}\cup\gen{y}\setminus\{a\}$ is isomorphic to $K_5\cdot K_5$ and by Theorem \ref{blocks}, $\p(G)$ is not toroidal, which is a contradiction. Since the cycles of order $6$ sharing an element of order $3$ are blocks, by Theorem \ref{blocks}, either $G$ all cyclic subgroups of order $6$ have the same subgroup of order $3$ in common. Clearly, $G$ has at most three cyclic subgroups of order $6$ for otherwise $G$ has four cyclic subgroups $\gen{x}$, $\gen{y}$, $\gen{z}$ and $\gen{w}$ of order $6$ and the subgraph induced by $\gen{x}\cup\gen{y}\cup\gen{z}\cup\gen{w}$ has a subgraph isomorphic to $K_{3,8}$, which is a contradiction. If $G$ has three distinct cyclic subgroups $\gen{x}$, $\gen{y}$ and $\gen{z}$ of order $6$, then by using \cite{en-wm}, it follows that the subgraph induced by $\gen{x}\cup\gen{y}\cup\gen{z}$ has genus $2$, which is a contradiction. Hence, $G$ has exactly two distinct subgroups of order $6$, say $\gen{x}$ and $\gen{y}$. Let $H=N_G(\gen{x})$. Then $[G:H]\leq2$. Since $G$ has two cyclic subgroups of order $6$, a simple verification shows that $C_H(x)=\gen{x}$. On the other hand, $H/\gen{x}$ is isomorphic to a subgroup of $\Aut(\gen{x})\cong\Z_2$. Hence $|H|$ divides $12$ and consequently $|G|$ divides $24$, which results in a contradiction for there are no such groups. The converse is straightforward.
\end{proof} 
\begin{theorem}\label{propertoroidal}
Let $G$ be a group. Then $\p^*(G)$ is a toroidal graph if and only if $G\cong \Z_7$, $\Z_8$, $D_{14}$, $D_{16}$, $Q_{16}$, $QD_{16}$ or $\Z_7\rtimes\Z_3$.
\end{theorem}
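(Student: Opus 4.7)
The plan is to mirror the template used for Theorem~\ref{toroidal}, replacing each invocation of Theorem~\ref{planar} by Theorem~\ref{properplanar}.

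First, I would establish the bound $\omega(G)\subseteq\{1,\ldots,8\}$. Repeating the clique-in-$\gen{x}$ argument from the proof of Theorem~\ref{properplanar}, any $x\in G$ of prime-power order $p^m$ contributes $K_{p^m-1}$ to $\p^*(G)$, and any $x$ whose order is divisible by $p^mq^n$ with $p\neq q$ contributes $K_{p^m-1+\varphi(p^mq^n)}$. Since $\gamma(K_8)=2$, these cliques cannot exceed $K_7$; this pins $\omega(G)$ down to $\{1,2,3,4,5,6,7,8\}$ after separately ruling out the only borderline order $12$ via the $K_{4,7}$ subgraph of $\p^*(\gen{x})$ (generators of $\gen{x}$ against the other seven nonidentity elements), whose genus is already $3$. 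If in addition $\omega(G)\subseteq\{1,\ldots,6\}$ then $\p^*(G)$ is planar by Theorem~\ref{properplanar}, contradicting toroidality, so $\omega(G)\cap\{7,8\}\neq\emptyset$.

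Next, whenever $|x|\in\{7,8\}$ the vertex set $\gen{x}\setminus\{1\}$ induces $K_6$ or $K_7$; because $\omega(G)\subseteq\{1,\ldots,8\}$, elements of order $7$ or $8$ admit no power relations with elements outside $\gen{x}$, so this clique is a connected component (for order $7$) or a block whose only attachment to the rest is through its unique involution (for order $8$). In either case it is a block of genus exactly $1$. By Theorem~\ref{blocks}, $\p^*(G)$ can accommodate only one such block, so exactly one of $7,8$ belongs to $\omega(G)$, the corresponding cyclic subgroup $\gen{x}$ is unique, and the induced subgraph of $\p^*(G)$ on the remaining vertices must be planar.

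In the case $7\in\omega(G)$, uniqueness of $\gen{x}$ gives $\gen{x}\trianglelefteq G$, $C_G(x)=\gen{x}$, and $G/\gen{x}\hookrightarrow\Aut(\Z_7)\cong\Z_6$, so $|G|$ divides $42$; running through $|G|\in\{7,14,21,42\}$ and discarding those that introduce a forbidden element order or a non-planar residue leaves $G\cong\Z_7$, $D_{14}$, $\Z_7\rtimes_F\Z_3$. In the case $8\in\omega(G)$, I would first forbid a second cyclic subgroup $\gen{y}$ of order~$8$ by checking each possible intersection $|\gen{x}\cap\gen{y}|\in\{1,2,4\}$ and observing that the resulting subgraph contains a configuration of genus at least $2$ (two disjoint $K_7$'s, two $K_7$'s glued at a single vertex, or two $K_7$'s glued along a $K_3$); uniqueness then gives $C_G(x)=\gen{x}$ and $G/\gen{x}\hookrightarrow\Aut(\Z_8)\cong\Z_2\times\Z_2$, so $|G|\in\{8,16,32\}$. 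Discarding $\Z_{16}$, $\Z_8\times\Z_2$ and the order-$32$ candidates (each for a specific reason such as an excluded element order of~$16$ or a second cyclic subgroup of order~$8$) leaves $G\cong\Z_8$, $D_{16}$, $Q_{16}$, $QD_{16}$. For the converse, each listed group is verified directly: $\p^*(G)$ is a single $K_6$ or $K_7$ block accompanied by planar pieces (isolated vertices, $K_2$'s, triangles), and Theorem~\ref{blocks} yields genus exactly~$1$.

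The hard part will be the $8\in\omega(G)$ case: the intersection analysis ruling out two cyclic subgroups of order~$8$, together with singling out exactly $D_{16}$, $Q_{16}$, $QD_{16}$ among the fourteen groups of order~$16$ while eliminating every group of order~$32$ of exponent dividing~$8$, is the most delicate step; everything else follows routinely from Theorems~\ref{properplanar} and~\ref{blocks}.
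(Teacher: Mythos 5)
Your overall route is the same as the paper's: bound $\omega(G)$ by clique/genus estimates, observe that an element of order $7$ or $8$ produces a $K_6$ or $K_7$ block so that Theorem~\ref{blocks} forces exactly one of $7,8$ to occur and forces uniqueness of the corresponding cyclic subgroup, then apply the $N/C$-theorem to get $|G|\mid 42$ or $|G|\mid 32$ and sift the candidates. Your treatment is in places more careful than the paper's (the paper dismisses the bound $\omega(G)\subseteq\{1,\ldots,8\}$ with ``the same as in Theorem~\ref{properplanar}'', which does not by itself kill order $12$; your $K_{4,7}$ argument, or equivalently the $K_8$-clique inside $\p^*(\Z_{12})$, is a needed supplement. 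Likewise you replace the paper's GAP call for $|G|\mid 32$ by a hand analysis, which is feasible since the modular group $M_{16}$ and all surviving order-$32$ candidates contain a second cyclic subgroup of order $8$.)

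There is, however, one genuine flaw, and it sits exactly where the theorem statement itself is suspect: your elimination of $\Z_7\rtimes\Z_6$ on the grounds of ``a non-planar residue'' is false. In the Frobenius group $\Z_7\rtimes\Z_6$ the complement to the $K_6$ of order-$7$ elements consists of seven pairwise disjoint sets $H\setminus\{1\}$ with $H\cong\Z_6$, and each such set induces $K_5$ minus the two edges $\{h^2,h^3\}$ and $\{h^3,h^4\}$, which is planar; no edges join distinct complements since distinct Frobenius complements intersect trivially. Hence $\p^*(\Z_7\rtimes\Z_6)$ is a disjoint union of $K_6$ with planar pieces and by Theorem~\ref{blocks} has genus exactly $1$, i.e.\ it \emph{is} toroidal. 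The paper's own proof of this theorem in fact retains $\Z_7\rtimes\Z_6$ in the order-$7$ case (deriving ``$G\cong\Z_7$, $D_{14}$, $\Z_7\rtimes\Z_3$ or $\Z_7\rtimes\Z_6$''), and Theorem~\ref{properprojective} asserts that $\p^*(\Z_7\rtimes\Z_6)$ is projective, which already implies it embeds in the torus. So the group genuinely belongs in the classification and the omission appears to be an error in the statement; no correct argument can discard it, and your proposal papers over this by inventing an obstruction that does not exist. Apart from this point the proposal is sound.
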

\begin{proof}
Suppose $\p^*(G)$ is toroidal. The same as in the proof of Theorem \ref{properplanar}, we can show that $\omega(G)\subseteq\{1,2,3,4,5,6,7,8\}$. If $\omega(G)\subseteq\{1,2,3,4,5,6\}$, then by Theorem \ref{properplanar}, $\p^*(G)$ is planar, which is a contradiction. Thus $\omega(G)\cap\{7,8\}\neq\emptyset$. On the other hand, by Theorem \ref{blocks}, $\{7,8\}\not\subseteq\omega(G)$. Therefore, either $\omega(G)\subseteq\{1,2,3,4,5,6,7\}$ or $\omega(G)\subseteq\{1,2,3,4,5,6,8\}$. If $7\in\omega(G)$, then again by Theorem \ref{blocks}, $G$ has a unique cyclic subgroup $\gen{x}$ of order $7$. Clearly, $\gen{x}\normal G$ and $C_G(x)=\gen{x}$. Since $G/\gen{x}$ is isomorphic to a subgroup of $\Aut(\gen{x})\cong\Z_6$, it follows that $G\cong\Z_7$, $D_{14}$, $\Z_7\rtimes\Z_3$ or $\Z_7\rtimes\Z_6$. Now, suppose that $8\in\omega(G)$. If $G$ has two distinct cyclic subgroups $\gen{x}$ and $\gen{y}$ of order $8$, then the subgraph induced by $\gen{x}\cup\gen{y}$ has a subgraph isomorphic to $K_5\cdot K_5$ or $2K_5$, which is a contradiction by Theorem \ref{blocks}. Therefore, $G$ has a unique cyclic subgroup $\gen{x}$ of order $8$. Then $\gen{x}\normal G$ and $C_G(x)=\gen{x}$. Since $G/\gen{x}$ is isomorphic to a subgroup of $\Aut(\gen{x})\cong\Z_2\times\Z_2$, by using GAP \cite{tgg}, we obtain $G\cong\Z_8$, $D_{16}$, $Q_{16}$ or $QD_{16}$. The converse is obvious.
\end{proof} 
\section{Projective (proper) power graphs}
The \textit{real projective plane} is a non-orientable surface, which can be represented on plane by a circle with diametrically opposed points identified.

Let $N_k$ be the connected sum of $k$ projective planes, where $k$ is a non-negative integer. The corsscap number of a graph $\Gamma$, denoted by $\overline{\gamma}(\Gamma)$, is the minimal integer $k$ such that $\Gamma$ can be embedded in $N_k$ such that the edges intersect only in the endpoints. A graph with crosscap $0$ is clearly a planar graph. A graph with crosscap $1$ is called a \textit{projective graph}. Clearly, if $\Gamma'$ is a subgraph of a graph $\Gamma$, then $\overline{\gamma}(\Gamma')\leq\overline{\gamma}(\Gamma)$. For complete graph $K_n$ and complete bipartite graph $K_{m,n}$, it is well known that
\[\overline{\gamma}(K_n)=\begin{cases}\left\lceil\frac{(n-3)(n-4)}{6}\right\rceil,&n\geq3\emph{ and }n\neq7,\\3,&n=7,\end{cases}\]
and 
\[\overline{\gamma}(K_{m,n})=\left\lceil\frac{(m-2)(n-2)}{2}\right\rceil\]
if $m,n\geq 2$. (See \cite{gr-2} and \cite{gr-1}, respectively). Thus
\begin{itemize}
\item $\overline{\gamma}(K_n)=0$ for $n=1,2,3,4$,
\item $\overline{\gamma}(K_n)=1$ for $n=5,6$,
\item $\overline{\gamma}(K_n)\geq 2$ for $n\geq 7$,
\item $\overline{\gamma}(K_{m,n})=0$ for $m=0,1$ or $n=0,1$, 
\item $\overline{\gamma}(K_{m,n})=1$ for $\{m,n\}=\{3\}$, $\{3,4\}$,
\item $\overline{\gamma}(K_{m,n})\geq2$ for $m,n\geq3$ and $m+n\geq8$.
\end{itemize}

A graph $\Gamma$ is \textit{irreducible} for a surface $S$ if $\Gamma$ does not embed in $S$ but any proper subgraph of $\Gamma$ embeds in $S$. Kuratowski's theorem states any graph which is irreducible for the plane is homomorphic to either $K_5$ or $K_{3,3}$. Glover, Huneke and Wang \cite{hg-ph-cw} constructed a list of $103$ pairwise non-homomorphic graphs which are irreducible for the real projective plane. Also, Archdeacon in \cite{da} proved this list is complete in the sense that a graph can be embedded on the real projective plane if and only if it has no subgraph homomorphic to any of the $103$ given graphs. For example, the graphs $K_5\cdot K_5$, $2K_5$, $K_{3,3}\cdot K_{3,3}$, $2K_{3,3}$, $K_{3,3}\cdot K_5$ and $K_{3,3}\cup K_5$ are irreducible for the real projective plane. Furthermore, the irreducible graphs in \cite{hg-ph-cw} show that the graph $K_7$ is not projective.
\begin{theorem}\label{projective}
Let $G$ be a finite group. Then $\p(G)$ is projective if and only if $G\cong\Z_5$, $\Z_6$, $D_{10}$, $D_{12}$, $\Z_3\rtimes\Z_4$ or $\Z_5\rtimes_F\Z_4$.
\end{theorem}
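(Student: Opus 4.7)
The plan is to mimic the structure of the proof of Theorem~\ref{toroidal}, replacing the block-additive genus bound with the irreducibility facts for the projective plane cited just before the theorem (in particular, $K_7$, $K_5\cdot K_5$ and $2K_5$ are all non-projective). First I would repeat the argument from the proof of Theorem~\ref{properplanar}, using that $K_7$ is not projective, to force $\omega(G)\subseteq\{1,2,3,4,5,6\}$. Since a projective graph is by definition non-planar, Theorem~\ref{planar} rules out $\omega(G)\subseteq\{1,2,3,4\}$, so $\omega(G)\cap\{5,6\}\neq\emptyset$. I would then eliminate $\{5,6\}\subseteq\omega(G)$: if $|x|=5$ and $|y|=6$, then $\gen{x}$ induces a $K_5$ and $\gen{y}\setminus\{y^3\}$ induces a $K_5$ sharing only the identity, producing a $K_5\cdot K_5$.

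In the case $5\in\omega(G)$, two distinct cyclic subgroups of order $5$ again yield a $K_5\cdot K_5$, so $G$ has a unique cyclic subgroup $\gen{x}$ of order $5$; it is normal, self-centralising, and $G/\gen{x}\hookrightarrow\Aut(\gen{x})\cong\Z_4$. Combined with $6\notin\omega(G)$, this forces $|G|\mid 20$, whence $G\cong\Z_5$, $D_{10}$, or $\Z_5\rtimes_F\Z_4$. In the case $6\in\omega(G)$ (with $5\notin\omega(G)$), I would show that $G$ has a unique cyclic subgroup of order $6$ by case analysis on the intersection of two such subgroups $\gen{x}$ and $\gen{y}$: intersections of orders $1$ and $2$ each produce a $K_5\cdot K_5$ after deleting the at most one shared involution; intersection of order $3$ produces two $K_5$'s glued along a triangle, whose non-projectivity is verified by exhibiting a topological obstruction from the Glover-Huneke-Wang list; and three or more $\Z_6$'s sharing a common $\Z_3$ contain a $K_{3,8}$, which satisfies $\overline{\gamma}(K_{3,8})\geq 2$ as recorded before the theorem. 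With uniqueness in hand, the centralizer argument of the toroidal proof gives $|G|\mid 12$, whence $G\cong\Z_6$, $D_{12}$, or $\Z_3\rtimes\Z_4$. For the converse, I would exhibit explicit drawings of each of the six $\p(G)$'s on a disk with antipodal boundary identification.

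The step I expect to be the main obstacle is the order-$3$-intersection case: the subgraph that arises has exactly $3v-3$ edges, so the Euler formula for the projective plane is tight and non-projectivity cannot be read off from a counting bound. Here one must locate a concrete subdivision or minor from the irreducible list, just as the toroidal proof borrowed \cite{en-wm} to certify that the analogous configuration on three $\Z_6$'s has genus two.
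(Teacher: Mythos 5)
Your proposal tracks the paper's proof up to and including the case $5\in\omega(G)$, but it diverges at the point you yourself flag as the main obstacle, and that is where there is a genuine gap. For two cyclic subgroups $\gen{x},\gen{y}$ of order $6$ with $\gen{x}\cap\gen{y}\cong\Z_3$, the paper does \emph{not} attempt to show that the induced subgraph on $\gen{x}\cup\gen{y}$ is non-projective. Instead it argues that (i) any two such subgroups must meet in a $\Z_3$ (your cases of trivial and order-$2$ intersection, handled identically via $K_5\cdot K_5$), (ii) there are at most two of them, because three sharing a $\Z_3$ already yield a $K_{3,6}$ and $\overline{\gamma}(K_{3,6})=2$, and (iii) a group with \emph{exactly} two cyclic subgroups of order $6$ would satisfy $C_G(x)=\gen{x}$, $[G:N_G(\gen{x})]\leq 2$ and $|N_G(\gen{x})|\mid 12$, hence $|G|\mid 24$; a GAP \cite{tgg} check then shows no group of order dividing $24$ has exactly two cyclic subgroups of order $6$. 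So the troublesome configuration is eliminated by group theory and computation, never by a topological obstruction. Your plan to "exhibit a topological obstruction from the Glover--Huneke--Wang list" for the two-$K_5$'s-glued-along-a-triangle configuration is not carried out, and it is far from clear it can be: the relevant $9$-vertex graph has clique number $5$ (so no $K_7$ or $K_6$), is too small and too sparse to contain $K_{3,5}$, $K_{4,4}-e$ or $K_{1,2,2,2}$ even as a minor, contains none of the disconnected or cut-vertex obstructions ($2K_5$, $K_5\cdot K_5$, etc.), and with $23$ edges on $9$ vertices it also passes the Euler bound $e\leq 3v-3$ for the projective plane. Unless you can actually locate a subdivision of one of the $103$ graphs of \cite{hg-ph-cw} in it (or prove non-embeddability by a bespoke face-counting argument), this case remains open in your argument, and if the graph turns out to be projective the step is not merely unproved but false as a strategy; you would then be forced back to the paper's group-theoretic route.

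Two smaller corrections. First, three cyclic subgroups of order $6$ sharing a common $\Z_3$ give a $K_{3,6}$ (the three elements of order dividing $3$ against the six generators), not a $K_{3,8}$; you only reach $K_{3,8}$ with four such subgroups, so as written your argument does not exclude the case of exactly three. This is harmless once corrected, since $\overline{\gamma}(K_{3,6})=\lceil (1)(4)/2\rceil=2$ by the formula from \cite{gr-1} quoted before the theorem, so $K_{3,6}$ already suffices. Second, your edge count "exactly $3v-3$" for the order-$3$-intersection configuration is off (the $9$-vertex induced subgraph has $23$ edges against the bound $24$, and its $7$-vertex core has $17$ against $18$); the conclusion you draw from it --- that Euler counting cannot decide the question --- is nevertheless correct, which only underlines that the missing obstruction is doing all the work in your version of the proof.
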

\begin{proof}
Suppose $\p(G)$ is projective. The same as in the proof of Theorem \ref{properplanar}, it can be easily seen that $\omega(G)\subseteq\{1,2,3,4,5,6\}$. If $\omega(G)\subseteq\{1,2,3,4\}$, then by Theorem \ref{planar}, $\p(G)$ is planar, which is a contradiction. If $5,6\in\omega(G)$, then the subgraph induced by $\gen{x}\cup\gen{y}$, where $\gen{x}$ and $\gen{y}$ are distinct subgroups of orders $5$ and $5$, or $5$ and $6$, respectively, contains a subgraph isomorphic to $K_5\cdot K_5$, which is a contradiction. Thus $\omega(G)\subseteq\{1,2,3,4,5\}$ or $\{1,2,3,4,6\}$.  If $5\in\omega(G)$, then $G$ has a unique normal cyclic subgroup $\gen{x}$ of order $5$. Clearly, $C_G(x)=\gen{x}$. Hence $G/C_G(x)$ is isomorphic to a subgroup of $\Aut(\gen{x})\cong\Z_4$, which implies that $|G|$ divides $20$. Therefore $G\cong\Z_5$, $D_{10}$ or $\Z_5\rtimes_F\Z_4$. Finally, suppose that $6\in\omega(G)$. If $\gen{x}$ and $\gen{y}$ are two distinct subgroups of order $6$, then $\gen{x}\cap\gen{y}\cong\Z_3$ for otherwise the subgraph induced by $\gen{x}\cup\gen{y}$ has a subgraph isomorphic to $K_5\cdot K_5$, which is a contradiction. Since $G$ has no subgraphs isomorphic to $K_{3,6}$ it follows that $G$ has at most two cyclic subgroups of order $6$. If $G$ has two distinct cyclic subgroups $\gen{x}$ and $\gen{y}$ of order $6$, then since $N_G(\gen{x})/C_G(x)$ is isomorphic to a subgroup of $\Aut(\gen{x})\cong\Z_2$ and $C_G(x)=\gen{x}$, it follows that $|N_G(\gen{x})|$ divides $12$. On the other hand, $\gen{x}$ has at most two conjugates, namely $\gen{x}$ and $\gen{y}$, which implies that $[G:N_G(\gen{x})]\leq2$. Thus $|G|$ divides $24$. A simple verification by GAP \cite{tgg} shows that there are no groups of order dividing $24$ which admit exactly two cyclic subgroups of order $6$, which contradicts our assumption. Therefore, $G$ has a unique cyclic subgroup of order $6$, say $\gen{x}$. Then $\gen{x}\normal G$ and $C_G(x)=\gen{x}$. Hence $G/\gen{x}$ is isomorphic to a subgroup of $\Aut(\gen{x})\cong\Z_2$, which implies that $G\cong\Z_6$, $D_{12}$ or $\Z_3\rtimes\Z_4$. The converse is straightforward.
\end{proof}
\begin{theorem}\label{properprojective}
Let $G$ be a finite group. Then $\p^*(G)$ is projective if and only if $G\cong\Z_7$, $D_{14}$, $\Z_7\rtimes\Z_3$ or $\Z_7\rtimes\Z_6$.
\end{theorem}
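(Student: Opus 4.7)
The plan is to parallel the proof of Theorem \ref{propertoroidal}, replacing the torus with the projective plane throughout. As the first step, I would bound $\omega(G)$. Mimicking the element-order computation from Theorem \ref{properplanar} (a cyclic subgroup of order $n=p^m$ induces a $K_{n-1}$ in $\p^*(G)$, while one of order $p^mq^l$ induces a complete subgraph of size $p^m-1+\varphi(p^mq^l)$) and using the fact listed earlier that $K_7$ is not projective while $K_6$ is, I would obtain $\omega(G)\subseteq\{1,2,3,4,5,6,7\}$. If $7\notin\omega(G)$, then Theorem \ref{properplanar} shows $\p^*(G)$ is planar, hence not projective; so I may assume $7\in\omega(G)$.

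Second, since $2K_5$ appears on the Glover--Huneke--Wang list of graphs irreducible for the projective plane, $G$ has a unique cyclic subgroup $\gen{x}$ of order $7$: two distinct such subgroups would intersect trivially and contribute an induced $2K_6\supseteq 2K_5$. Hence $\gen{x}\trianglelefteq G$. If some $y\in C_G(x)$ had order $p\neq 7$, then $xy$ would have order $7p\notin\omega(G)$; thus $C_G(x)=\gen{x}$. Consequently $G/\gen{x}$ embeds in $\Aut(\gen{x})\cong\Z_6$, so $|G|$ divides $42$.

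A short case analysis on $|G|\in\{7,14,21,42\}$ then isolates the candidate list. For orders $7$, $14$, $21$ the groups avoiding elements of order $>7$ are respectively $\Z_7$, $D_{14}$ (since $\Z_{14}$ has an element of order $14$), and the Frobenius group $\Z_7\rtimes\Z_3$ (since $\Z_{21}$ has an element of order $21$). For $|G|=42$, one checks each of the six groups of order $42$---as in the proof of Theorem \ref{propertoroidal}, most efficiently using GAP---and verifies that every group other than the Frobenius group $\Z_7\rtimes\Z_6$ contains an element of order $14$, $21$, or $42$.

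For the converse I would verify each of the four groups directly. In every case $\p^*(G)$ is the disjoint union of $K_6$ (coming from the unique cyclic subgroup of order $7$) and a planar remainder: no further vertices for $\Z_7$; isolated involutions for $D_{14}$; seven disjoint copies of $K_2$ for $\Z_7\rtimes\Z_3$; and seven disjoint copies of $\p^*(\Z_6)$ (which is $K_5$ minus two edges incident to a common vertex, and hence planar) for $\Z_7\rtimes\Z_6$. Placing the planar remainder inside a planar face of a projective embedding of $K_6$ yields the required projective embedding. The main obstacle I anticipate is the forward direction in order $42$: one must rule out all five non-Frobenius groups of that order, and the cleanest route is direct case-by-case inspection (most compactly by computer algebra), rather than a uniform structural argument.
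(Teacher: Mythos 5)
Your proposal follows essentially the same route as the paper's proof: bound $\omega(G)\subseteq\{1,\ldots,7\}$ via the non-projectivity of $K_7$, force $7\in\omega(G)$ since otherwise $\p^*(G)$ is planar, use $2K_6\supseteq 2K_5$ (irreducible for the projective plane) to get a unique, hence normal, cyclic subgroup of order $7$ with $C_G(x)=\gen{x}$, conclude $|G|$ divides $42$, and enumerate. You merely supply details the paper leaves implicit, namely the case-by-case elimination of the other groups of order dividing $42$ and the explicit projective embedding of $K_6$ together with a planar remainder in the converse; both are correct.
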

\begin{proof}
Suppose $\p^*(G)$ is projective. The same as in the proof of Theorem \ref{properplanar}, it can be easily seen that $\omega(G)\subseteq\{1,2,3,4,5,6,7\}$. If $\omega(G)\subseteq\{1,2,3,4,5,6\}$, then by Theorem \ref{properplanar}, $\p(G)$ is planar, which is a contradiction. Thus $7\in\omega(G)$. If $G$ has two distinct cyclic subgroups of order $7$, then $\p^*(G)$ has a subgraph isomorphic to $2K_6$, which is impossible. Thus $G$ has a unique cyclic subgroup $\gen{x}$ of order $7$. Then $\gen{x}\normal G$ and $G/C_G(x)$ is isomorphic to a subgroup of $\Aut(\gen{x})\cong\Z_6$. However, $C_G(x)=\gen{x}$, which implies that $|G|$ divides $42$. Therefore $G\cong\Z_7$, $D_{14}$, $\Z_7\rtimes\Z_3$ or $\Z_7\rtimes\Z_6$. The converse is obvious.
\end{proof}
\begin{acknowledgment}
The authors would like to thank prof. Myrvold for providing us with the toroidal graph testing  algorithm used in the proof of Theorem \ref{toroidal}.
\end{acknowledgment}

\end{document}